\newtheorem{observation}{Observation}
 \journalname{Natural Computing}
\begin{document}

\title{DNA origami and the complexity of Eulerian circuits with turning costs\thanks{The work of the first and fourth authors was supported by the National Science Foundation (NSF) under grants  DMS-1001408 and EFRI-1332411.}}



\author{Joanna A. Ellis-Monaghan \and Andrew McDowell  \mbox{ \and Iain Moffatt}         \and
        Greta Pangborn 
}


\institute{J. Ellis-Monaghan  \at
              Department of Mathematics, Saint Michael's College, One Winooski Park, Colchester, VT, 05439, USA \\
              \email{jellis-monaghan@smcvt.edu}        
 \and
           A. McDowell \at
              	Department of Mathematics, Royal Holloway, University of London, Egham, Surrey, TW20 0EX, United Kingdom \\
              \email{Andrew.Mcdowell.2010@live.rhul.ac.uk}
           \and
           I. Moffatt \at
              	Department of Mathematics, Royal Holloway, University of London, Egham, Surrey, TW20 0EX, United Kingdom \\
              \email{iain.moffatt@rhul.ac.uk}
 \and
           G. Pangborn \at
               Department of Computer Science, Saint Michael's College, One Winooski Park, Colchester, VT, 05439, USA \\
             \email{gpangborn@smcvt.edu}
}

\date{Received: date / Accepted: date}

\maketitle

\begin{abstract}
Building a structure using self-assembly of DNA molecules by origami folding requires finding a route for the scaffolding strand through the desired structure.  When the target structure is a 1-complex (or the geometric realization of a graph), an optimal route corresponds to an Eulerian circuit through the graph with minimum turning cost.  By showing that it leads to a solution to the 3-SAT problem, we prove that the general problem of finding an optimal route for a scaffolding strand for such  structures is NP-hard. We then show that the problem may readily be transformed into a Traveling Salesman Problem (TSP), so that  machinery that has been developed for the TSP may  be applied to find optimal routes for the scaffolding strand in a DNA origami self-assembly process. We give results for a few special cases, showing for example that the problem remains intractable for graphs with maximum degree 8, but is polynomial time for 4-regular plane graphs if the circuit is restricted to following faces. We conclude with some implications of these results for related problems, such as biomolecular computing and mill routing problems.

\keywords{DNA origami\and DNA self-assembly \and turning cost \and Eulerian circuit \and Hamiltonian cycle \and threading strand \and biomolecular computing \and mill routing \and computational complexity \and A-trails}
\subclass{92E10 \and 05C45 \and 05C85}

\end{abstract}


\section{Introduction}
Given an Eulerian graph (a connected graph in which all the vertices have even degree), it is well known that an Eulerian circuit, that is, a circuit that traverses each edge exactly once, can be found in polynomial time.  Here we have the additional information of a cost associated with each of the possible routes that a circuit can take through a vertex (the turnings), and we seek a lowest cost Eulerian circuit.  We show that finding an Eulerian circuit using a  best possible set of turnings is in general NP-hard.  This question arose as a design strategy problem for DNA self-assembly via origami folding, which involves finding an optimal route for the scaffolding strand of DNA through the targeted structure.

Self-assembly is the physical process by which structures form from disordered components, without outside direction, based on the local chemical and physical properties of the materials used. DNA naturally possesses properties conducive to self-assembly since DNA strands bond through base pairs within the strands following well-understood rules. For DNA self-assembly, DNA strands are designed so that when base pairing occurs, a molecule forms with the desired geometric structure.  Several methods of DNA self-assembly have been implemented, such as those using branched junction molecules pioneered by Seeman, \cite {CS91}, and those using DNA origami methods pioneered by Rothmund, \cite{Rot06}. A wide range of possible applications have been proposed for DNA self-assembly such as nanoscale circuitry and robotics, drug delivery systems, and biomolecular computing. DNA self-assembly methods and their applications are surveyed in, for example, \cite{Luo03},  \cite{PH11}, and \cite{San10}.

In DNA origami methods of nanoscale self-assembly, a single scaffolding strand of DNA traces the construct exactly once, and then short helper strands, called staples, bond to this strand to fold and lock it into the desired configuration (see, for example, \cite{HLS09}, \cite{NHLY10}, and  \cite{Rot06}). The design process for DNA origami assembly involves finding a route for a scaffolding strand through the desired structure.  While originally applied to 2-complexes (solid 2D) structures, and later to 3-complexes (solid 3D structures), that are `filled' by the strands of DNA, a logical next step is adapting this technique to 1-complexes, or graph-theoretical structures, such the skeletons of polyhedra.   Such graph-theoretical structures (cubes \cite{CS91}; truncated octahedra \cite{ZS94}; rigid octahedra \cite{SQJ04};  tetrahedra, dodecahedra, and buckyballs \cite{H+08}; and a 3D crystalline lattice \cite{Z+09}) have already been assembled via branched junction molecules.  It is  now reasonable to try to assemble these and similar structures from DNA origami.  However, the design strategies for `filled' constructions, such as the stars and smiley faces of \cite{Rot06}, or the 3D solid bricks, honeycombs and modularly assembled icosahedra of \cite{DDS09}, are different from those needed for open, graph-theoretical structures such as 1-complexes.  If the structure is a 1-complex or graph embedded in 3-space, for example a polyhedral skeleton, then, since the scaffolding strand is usually a single circular strand of DNA, its route must correspond to an Eulerian circuit through the graph or through some augmentation of the graph (if it is not Eulerian, for example).  We focus here on these structures that require an Eulerian circuit as the route for the scaffolding strand.

In general, since DNA bonding of complementary base pairs is energetically favourable, a system will tend to maximise the number of matches naturally, according to the laws of thermodynamics. However, other physical properties and behaviours of DNA strands may influence the shapes it forms.  Thus, in a DNA origami construction of a 1-complex there may be preferred ways for the scaffolding strand to pass through each vertex, for example, following a face of the structure rather than weaving through the vertex.  This leads to the associated the graph theoretic problem of finding an Eulerian circuit with minimum turning costs.  For example, a turning in an Euler circuit would have low cost if it corresponds to a configuration the scaffolding strand would readily adopt, a medium cost if the strand can be made to conform to the configuration albeit perhaps with some difficulty, and a high cost if the DNA strand is physically constrained from the configuration.  Ideally, the route for the scaffolding strand would only require turns that the strand follows readily, i.e., those that correspond to the turnings in an Euler circuit with minimum turning costs.

We show below that finding an Eulerian circuit with minimum turning cost is in general NP-hard by proving that it implies a solution to the 3-SAT problem, which is well-known to be NP-hard. This result has significant ramifications for using DNA origami as a basis for biomolecular computing of graph invariants. (Graph invariants are properties of graphs that remain unchanged under isomorphism.) Many graph invariants (for example, the existence of a Hamilton cycle, graph colorability, etc.) are known to be NP-hard. However, biomolecular computing strategies have been proposed for them (see, for example, \cite{Adl94} for Hamilton cycles, and \cite{JK99} for 3-SAT and vertex 3-colorability).  In order to compute a graph invariant via a biological process, the graph must first be encoded in molecular structures.  The results here show that assembling a graph from DNA origami as a first step in the computation must be approached with caution, as finding a good self-assembly strategy may be a priori intractable.

However, there is good news from a more pragmatic view point.  We also show that the minimum cost Eulerian circuit problem may be transformed in polynomial time into a Traveling Salesman Problem (TSP).  While the TSP is also in general NP-hard, extensive work has been done on this problem (see \cite {LL85} for a comprehensive overview), and the results here mean that  machinery developed for the TSP may now be brought to bear on finding optimal routes for a scaffolding strand for DNA origami assembly of reasonably-sized graph-theoretical structures for practical applications.

We also give the complexity of some special cases of the turning cost problem, showing for example that if the Eulerian graph is 8-regular, then the problem remains intractable, but that there is a polynomial time algorithm for certain classes of 4-regular graphs.  We also discuss some implications of the results here for biomolecular computing, and for the mill routing problem as given in \cite{A+05}.

\section{Graph theoretical background and problem statement}

The following conventions are used throughout this paper.  Further details and a full formalization of these concepts, including the transitions discussed below, may be found, for example, in~\cite{Fle90,Fle91}.

 Graphs are finite and may have loops and multiple edges.  Thus, a graph $G$ consists of a finite set of {\em vertices} denoted $V(G)$, and a finite multiset of {\em edges}, denoted $E(G)$, that are unordered pairs $(u,v)$ of vertices, with $v=u$ in the case of a loop. We generally use $n$ to denote $|V(G)|$  and call it the {\em size} of $G$. As usual, indices may be used as needed to distinguish among multiple edges: if $(u,v)$ has multiplicity $m$ then we index the $m$ copies with $1, 2, \ldots ,m$.  However, following standard convention, we will typically suppress the index and  just write $(u,v)$ for an edge and refer to the edge multiset $E(G)$ simply as the edge set.  Equivalently, a multigraph may be defined as a triple $(V,E, f)$, where $V$ and $E$ are disjoint sets of vertices and edges respectively, and $f$ is a function from $E$ to the set of unordered pairs of vertices that specifies the endpoints of each edge.  In either case, all edges are distinguishable.

A graph is \emph{planar} if it may be drawn in the plane without any edges crossing.   A \emph{plane graph} is a planar graph drawn in the plane.

  Intuitively, if an edge is thought of as a line segment between two vertices $u$ and $v$, and $p$ is the midpoint, then the two half-edges are the line segments $up$ and $vp$. If $e=(u,v)$ is an edge, then the half-edge incident with $u$ is formally denoted by $(u, e)$.  The two half-edges incident with a loop may be arbitrarily assigned indices to distinguish them if necessary, but as with multiple edges, when there is no danger of confusion we typically suppress the index. Note that, using indices as needed for loops and multiple edges, all half-edges are distinguishable, and the edge set of a graph is uniquely determined by its half-edges.  The {\em degree},  $d(v)$,  of a vertex $v$ is the number of half-edges incident with it. The {\em maximum degree} of a graph $G$ is $\Delta(G):= \max_{v\in V(G)} \{d(v)\} $.

The application we consider here involves Eulerian graphs, which are connected graphs wherein the degree of every vertex is even.  A {\em walk} traverses consecutive edges in a graph, allowing repeated edges and vertices; a {\em trail} allows repeated vertices but not edges; and a {\em path} repeats neither.  A {\em circuit} is a closed trail, and a {\em cycle} is a closed path.  Given a connected graph $G$, an {\em augmented graph} results from drawing an edge between any two vertices of odd degree, and continuing the process until no vertex of odd degree remains (a graph necessarily has an even number of odd degree vertices).  The resulting augmented graph is then Eulerian.

The DNA self-assembly application discussed in the introduction now motivates the following definition and problem formulation, with the turning costs corresponding to the prioritized set of preferred routes through each vertex for the scaffolding strand. We use the convention that the more preferable a turning, the lower its turning cost.

\begin{definition}[Turning cost.]\label{defn:turning costs}
Let $G$ be an Eulerian graph and $v$ be a vertex of $G$.
A {\em pairing} at $v$ is a set $\{(v,e), (v,f)\}$, where $(v,e)$ and $(v,f)$ are distinct half-edges incident with  $v$. To every pairing $\{(v,e), (v,f)\}$ we associate a non-negative rational number, called the {\em turning cost} of the pairing, denoted by $w_v(e,f)$. (The turning costs at $v$ can be thought of as a function $w_v$ from the set of all pairings at $v$ to the non-negative rational numbers.) When the half-edges involved in a turning is clear from the common vertex we will usually refer to the pairing and turning cost of the two edges rather than specifying the half-edges, e.g. simply say $\{e,f\}$ is a pairing at $v$. We call the set of costs at a vertex $v$  the {\em turning costs} at $v$.
\end{definition}

A {\em transition system}, $T(v)$, at $v$ is a set $S$ of pairings at $v$ such that every half-edge incident with  $v$ appears in exactly one pairing in $S$.  The cost of a transition system at $v$ is the sum of the turning costs over all pairings in the transition system, and is denoted $w(T(v))$.

Note that an Eulerian circuit $C$ determines a transition system at each vertex  by pairing half-edges at a vertex $v$ if they appear consecutively in $C$.  (The converse is not true: a set of transition systems at each vertex determines a disjoint set of circuits in a graph, but not necessarily an Eulerian circuit.) We denote the transition system at $v$ determined by an Eulerian circuit $C$ by $T_C(v)$.

If $C$ is an Eulerian circuit of a graph $G$, then the {\em cost}  of $C$, denoted $w(C)$, is the sum of the turning costs of all pairings that it determines:
\[  w(C)=\sum_{ v\in V(G)}   w(T_C(v)).   \]

The optimization problem arising from our DNA origami application may now be stated as follows:
\begin{problem}\label{prob:Eulerian Costs}
Given an Eulerian graph with turning costs, find an Eulerian circuit $C$ with minimum cost $w(C)$.
\end{problem}

We recall that, informally, a decision
problem is one for which there is a yes or no answer, such as, can graph $G$ be
colored using $k$ colors?  P is the set of decision problems for which it is possible to
determine the answer in polynomial time in the size of the input, and NP is
the set of decision problems for which it is possible to determine if a given answer is
correct in polynomial time in the size of the input.  Whether or not $P=NP$
remains a famous open question, but there is a large class of
problems, referred to as NP-hard, for which finding a polynomial time algorithm for any one of them
would automatically lead to polynomial time algorithms for all problems in NP.
A decision problem is NP-complete if it is both NP-hard and in NP.
See \cite{GJ79} for additional background.

Here, the decision problem corresponding to Problem~\ref{prob:Eulerian Costs} is the following:

\begin{problem}\label{prob:Eulerian decision}
Given an Eulerian graph $G$ equipped with a set of turning costs at each vertex, and a non-negative constant $c$, determine if there is an Eulerian circuit $C$ with the minimum cost $w(C) \leq c$.
\end{problem}

Note that if there were a polynomial time algorithm for Problem \ref{prob:Eulerian Costs}, then Problem \ref{prob:Eulerian decision} would be in P, since we could simply find a minimum cost Eulerian circuit and compare its cost to the given constant $c$.  However, we will show in the next section that these two problems are in general intractable by using the following special case of Problem \ref{prob:Eulerian decision}.

\begin{problem}\label{prob:zero one decision}
Given an Eulerian graph $G$ equipped with turning costs in $\{0,1\}$ at each vertex,  determine if there is a zero-cost Eulerian circuit $C$.
\end{problem}

\section{Finding an Eulerian circuit with minimum turning cost is NP-hard}

We will demonstrate that finding an Eulerian circuit with minimum turning cost is NP-hard by showing that 3-SAT is polynomial time reducible to this problem.  In particular, we note that if Problem \ref{prob:Eulerian decision} can be solved in polynomial time, then the special case Problem \ref{prob:zero one decision} can be solved in polynomial time.  However, we show that 3-SAT can be reformulated in polynomial time to the problem of finding such a zero-cost Eulerian circuit in an associated Eulerian graph with turning costs of zero or one.  This would imply that 3-SAT could be solved in polynomial time.  From this we conclude that Problem \ref{prob:Eulerian decision} is NP-complete, and hence Problem \ref{prob:Eulerian Costs} is NP-hard.

The 3-SAT problem involves a Boolean logic conjunctive normal form expression such as \[(x_{1}\vee \neg x_{2} \vee x_{3})\wedge (\neg x_{1}\vee x_{4} \vee x_{5})\wedge \dots \wedge (x_{2}\vee x_{3} \vee \neg x_{4}).\]  In a  3-SAT problem, each clause in such an expression has exactly three distinct variables from a set $\{x_1, x_2, \ldots , x_n \}$ of Boolean variables, that is, each variable may be assigned a value of true or false.  The symbols $\vee$, $\wedge$, $\neg$ represent the logic operations `and', `or', and `not', respectively.  A {\em literal} is a variable $x_i$ or its negation $\neg x_i$, with the former referred to as a {\em positive literal} and the latter as a {\em negative literal}. For the  3-SAT problem, only the logic operations $\lor$ and  $\neg$ may appear inside the clauses, and only $\land$ may join clauses. The decision problem asks whether there is an assignment of true or false to each of the variables such that the whole expression evaluates to true (see for example, \cite{KT05} pg. 459).

\subsection{Constructing an Eulerian graph with turning costs from a 3-SAT instance}  \label{sec:associated graph}

We begin with an arbitrary 3-SAT instance, and construct an associated Eulerian graph.
For expositional clarity, we  will describe this graph via an embedding in 3-space, but since the objective is an abstract Eulerian graph, the choice of specific embedding is irrelevant. The given embedding simply facilities our description of which pairs of half-edges should receive which turning costs.

For each Boolean variable $x_i$ in the 3-SAT expression, we draw a vertex $x_i$ in the plane.
We read the 3-SAT expression from left to right, and for each clause we create a triangle on the vertices $x_i$, $x_j$ and $x_k$, where $x_i,x_j,$ and $x_k$ are the variables in the clause. The edges of the triangle may cross in the plane, but we require each of the three pairings  of half-edges $\{(x_i, (x_i,x_j)),(x_i, (x_i,x_k))\}$,  $\{(x_j, (x_j,x_i)),(x_j, (x_j,x_k))\}$ and $\{(x_k, (x_k,x_i)),(x_k, (x_k,x_j))\}$ in the triangle to appear consecutively in the cyclic orders of the vertices at their common end points, i.e. the edges are neighbors in the plane drawing. Which of the two possible orders for two consecutive half-edges does not matter, provided they are consecutive.  The order of the pairs in the cyclic order about a vertex also does not matter.  At each vertex, we shade a small region between the half-edges in the same triangle to record this property, as in Figure \ref{f.tri1}.  Again we emphasize that the shading of regions and the embedding into 3-space are simply  expository conveniences for describing the construction; they are not necessary for the implementation.

\begin{figure}
\centering
\subfigure[ Triangles for 3-SAT formula: $(x_1\lor x_2\lor \lnot x_3)\land (\lnot x_1\lor x_3\lor x_4)\land ( x_1\lor x_2\lor \lnot x_5)$.]{
\labellist \small\hair 2pt
\pinlabel {$x_2$} at   37 4
\pinlabel {$x_5$} at   104 37
\pinlabel {$x_1$} at    98 126
\pinlabel {$x_3$} at   146 4
\pinlabel {$x_4$} at    204 100
\endlabellist
\includegraphics[scale=.8]{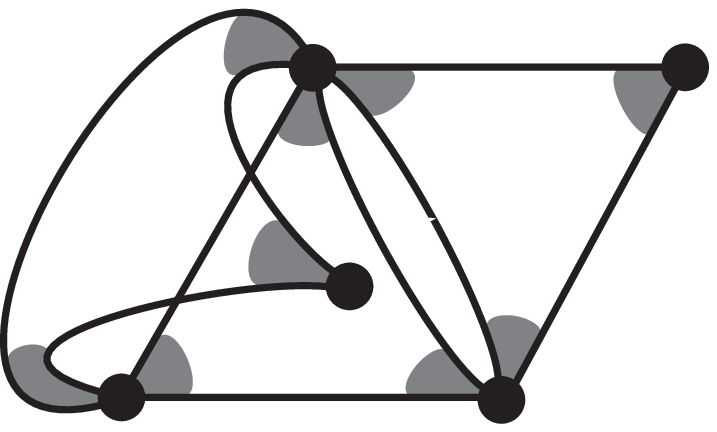}
 \label{f.tri1}
}
\hspace{10mm}
\subfigure[Initial neighbourhood of a vertex  $x_1$.]{
\labellist \small\hair 2pt
\pinlabel {$x_1$}  at  77 95
\pinlabel {$(x_1 \!  \! \lor  \! \! x_p  \! \! \lor   \! \!  x_q)$} [r] at   42 94
\pinlabel {$( x_1 \! \! \lor \! \!  x_l \! \! \lor \! \!   x_m)$} [l] at   114 94
\pinlabel {\rotatebox{90}{$( \lnot x_1 \! \! \lor \! \!  x_n \! \! \lor \! \!   x_r)$}}  at   78 -2
\endlabellist
\raisebox{10mm}{\includegraphics[scale=.8]{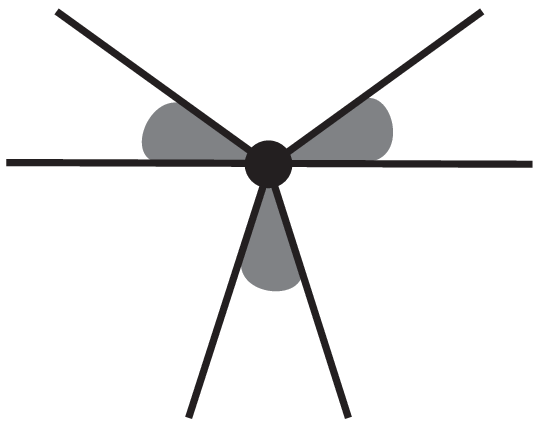}}
 \label{f.tri2}
}
\caption{Forming the triangles of the graph associated with a 3-SAT instance.}
\label{f.tri}
\end{figure}

We label each triangle with the clause it represents.  Observe that each vertex has even degree, with a neighbourhood consisting of alternating shaded and unshaded regions, where the shaded regions are parts of triangles labelled by clauses
containing the literal (either positive or negative) that labels the vertex.  See Figure~\ref{f.tri2}.

\begin{figure}
\centering
\subfigure[Adding the apex vertex $u$.]{
\labellist \small\hair 2pt
\pinlabel {$u$} at   208 195
\endlabellist
\raisebox{10mm}{\includegraphics[scale=.8]{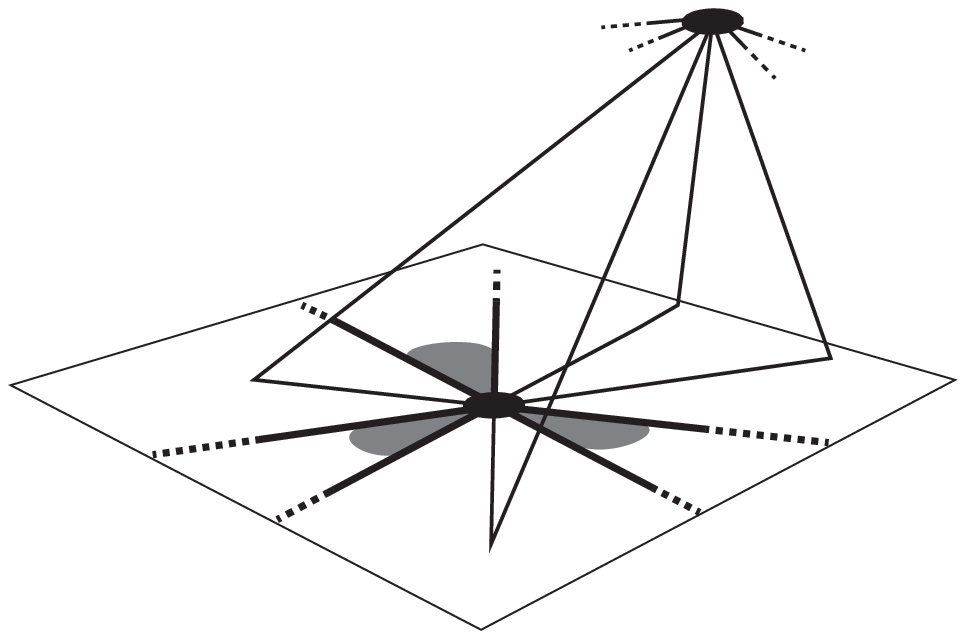}}
 \label{f.aug2}
}
\hspace{10mm}
\subfigure[Final neighbourhood of a vertex $x_1$.]{
\labellist \small\hair 2pt
\pinlabel {$(x_1 \! \! \lor \! \!  x_p \! \! \lor \! \!   x_q)$} [r] at   42 94
\pinlabel {$( x_1 \! \! \lor \! \!  x_l \! \! \lor \! \!   x_m)$} [l] at   114 94
\pinlabel {\rotatebox{90}{$( \lnot x_1 \! \! \lor  \! \! x_n \! \! \lor  \! \!  x_r)$}}  at   78 -2
\endlabellist
\raisebox{10mm}{\includegraphics[scale=.8]{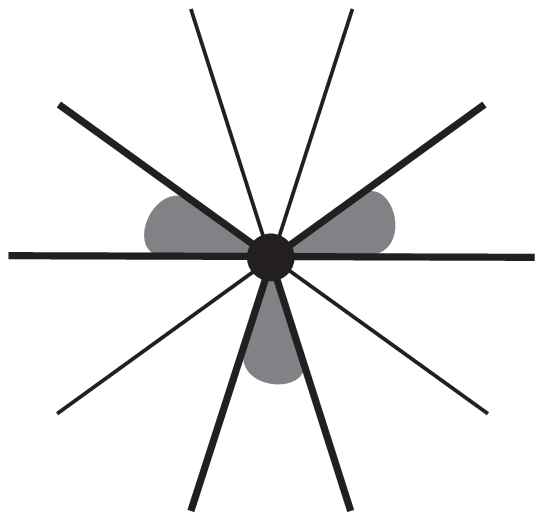}}
 \label{f.aug1}
}
\caption{Adding the apex vertex.}
\label{f.aug}
\end{figure}

We now add an extra \emph{apex} vertex, $u$, above the plane.  We will draw  edges (in general, there will be multiple edges) between $u$ and a vertex $x_i$ so that the half-edge incident with $x_i$ lies in the plane in a small neighbourhood of $x_i$ before rising above the plane to meet the other half of the edge  (the half-edge incident with  $u$). See Figure~\ref{f.aug2}. These half-edges incident with $x_i$ are drawn in the unshaded regions about $x_i$, according to the following scheme.  For each unshaded region at $x_i$, if the literal corresponding to $x_i$ is positive in both clauses labelling the shaded regions bounding the white region, or is negative in both, then we place two edges from $x_i$ to $u$ emerging from this unshaded region. If, however, the literal is positive in one clause, and negative in the other, we then place one edge emerging from the unshaded region, as in Figure~\ref{f.aug1}.

If we consider the sequence of literals corresponding to the variable $x_i$ in the triangles labelling the shaded regions about the vertex $x_i$, we notice that sequences of consecutive positive literals alternate with sequences of negative literals.  Each time there is a switch from positive to negative, or vice versa, we add a single edge, hence we have added an even number of single edges.  Each time there is no switch, we add two edges.  Thus, in total, we will have added an even number of edges, preserving the even degree of every vertex $x_i$, and ensuring that $u$ is also of even degree.
Since every vertex is connected to $u$, the graph is connected.  In addition every vertex has even degree, so the graph is Eulerian.

We now assign turning costs to the half-edge pairings on each vertex as follows. We assign a cost of zero to every pairing of half-edges incident with  the apex vertex $u$. We also assign a cost of zero to any consecutive pairing of half-edges in the clockwise orientation about each vertex $x_i$. All other pairings of half-edges have cost one.

Thus we have associated an Eulerian graph with turning costs to the given 3-SAT instance. For a 3-SAT instance $I$, we denote this graph by  $G_I$ and call it  the {\em Eulerian graph with turning costs of $I$}.

\subsection{The Eulerian graph $G_I$ may be constructed in polynomial time} \label{polytime graph}

\begin{proposition}\label{p.polyt}
Given a 3-SAT expression $I$ where $n$ is the number of distinct literals and $r$ is the number of clauses in the expression, then $G_I$, the Eulerian graph with turning costs of $I$, may be constructed in $O(n r^2)$ time.
  \end{proposition}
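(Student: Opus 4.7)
The plan is to go through the construction from Section~\ref{sec:associated graph} step by step and bound the time cost of each stage in terms of $n$ and $r$. The construction splits into four phases: placing the $n$ literal vertices, drawing the $r$ triangles (one per clause), attaching the apex vertex $u$ together with its incident edges, and finally assigning a turning cost in $\{0,1\}$ to every pairing at every vertex. I would show that the first three phases together cost $O(n+r)$ time, and that the fourth phase is the dominant one, costing $O(r^2)$.

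For the structural stages, the vertex count is $n+1$ and the edge count is at most $9r$: the $r$ clauses contribute exactly $3r$ triangle edges, and the number of apex edges emerging from the unshaded regions around a vertex $x_i$ is at most twice the number $c_i$ of clauses in which $x_i$ (positive or negative) appears, so the total apex-edge count is at most $2\sum_i c_i = 6r$. To implement the construction I would maintain the cyclic order around each vertex as a circular linked list, reading the clauses once from left to right and appending the triangle sides at each incident vertex in $O(1)$ per insertion; a second pass over each vertex's cyclic list then examines the literal signs in consecutive shaded regions to decide whether one or two apex edges are needed per unshaded region. This gives a total structural cost of $O(n+r)$.

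The dominant contribution comes from writing down the turning costs. At each vertex $v$ there are $\binom{d(v)}{2}$ pairings, and for each pairing it takes $O(1)$ time to decide its label by checking consecutiveness in the stored cyclic order. Since $d(u)\le 6r$, $d(x_i)\le 4c_i$, $\sum_i c_i = 3r$, and $c_i\le r$ for each $i$, the total number of pairings satisfies
\[
\sum_{v\in V(G_I)} \binom{d(v)}{2} \;=\; O\!\left(r^2 + \sum_i c_i^2\right) \;=\; O(r^2),
\]
where the last step uses $\sum_i c_i^2 \le (\max_i c_i)\sum_i c_i \le 3r^2$. Combining this with the $O(n+r)$ structural cost yields an overall running time of $O(n+r^2)$, which is comfortably within the claimed bound $O(nr^2)$.

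I do not expect any substantive obstacle; the proof is essentially a careful enumeration. The one point requiring attention is verifying that the cyclic-order bookkeeping admits amortised constant-time insertions and constant-time consecutiveness queries, so that building up the neighbourhood of each vertex and then labelling each pairing both fit within the claimed budget. A doubly-linked circular list at each vertex, with a pointer stored on each half-edge, suffices for this.
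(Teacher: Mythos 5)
Your proposal is correct and follows essentially the same route as the paper's proof: a phase-by-phase enumeration of the construction in which listing the turning costs is identified as the dominant step. Your accounting is in fact slightly sharper than the paper's --- the paper bounds every vertex degree crudely by $4r$ and so gets $O(nr^2)$ from $n$ vertices times $\binom{4r}{2}$ pairings, whereas your per-vertex bound $d(x_i)\le 4c_i$ together with $\sum_i c_i = 3r$ yields $O(n+r^2)$ --- but this only strengthens the stated conclusion.
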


  \begin{proof}
 Since the vertices except $u$ are labeled by the literals, which are indexed from 1 to $n$, only $n$ needs to be known to create the vertex list, and $n$ may be found simply by recording the highest index appearing in the clauses as they are read one at a time.  Furthermore, each clause contributes three edges, the three half-edge pairings of which may added (in consecutive order) to the cyclic orders of the corresponding vertex.  Since the order of the pairs about the vertex does not matter, they may be added to the cyclic order as they are read in.  Thus, again, each clause need only be read once to create the list of these edges.

Adding the half-edges for the edges from each vertex $x_i$ to the apex vertex $u$ involves reading through the cyclic order about each vertex once.  Since each clause can contribute at most two half-edges incident with  a given vertex, there are at most $2r$ half-edges in the sequence for each vertex. Furthermore, we have to insert no more than $2r$ half-edges corresponding to edges from the vertex to $u$.  Thus, since there are $n$ vertices, this process takes $O(rn)$ steps.

Turning costs are assigned by reading  the list of half-edges at each vertex, and recording a zero for consecutive half-edges on the list, and a one for all other pairs.  Since no vertex may have degree greater than $4r$, there are at most ${4r}\choose {2}$ pairs of edges to assign turning costs for at each vertex except for $u$.   There is no need to record turning costs for pairs of edges incident with  $u$ as these are all zero and hence cannot contribute to the total cost of the Eulerian circuit.    Thus, listing all the turning costs requires $O(n r^2)$ steps.

Since listing the turning costs for each pair is the most time consuming step, the overall complexity of constructing the associated Eulerian graph with turning costs is  $O(n r^2)$.
\hfill$\qed$
\end{proof}

\subsection{Finding an Eulerian circuit with minimum turning costs is NP-hard}\label{polytime translate}

We use the construction of Subsection \ref{sec:associated graph} to show that the 3-SAT problem is polynomial time reducible to Problem \ref{prob:Eulerian decision}.

\begin{theorem} \label{thrm:3SAT equiv}
Given a 3-SAT instance $I$ and the Eulerian graph with turning costs of $I$,  there is a solution to the 3-SAT instance if and only if there is a zero-cost solution to the corresponding turning cost instance.
\end{theorem}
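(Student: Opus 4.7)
The plan is to establish a correspondence between truth assignments to $I$ and zero-cost transition systems at the variable vertices of $G_I$, and then exploit the freedom at the apex $u$ (where all pairings are zero cost) to assemble the induced walks into a single Eulerian circuit.

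First, I will analyze the local zero-cost structure at a variable vertex $x_i$. The cyclic order around $x_i$ consists of alternating \emph{shaded blocks} of two triangle half-edges (one such block per clause containing $x_i$) and \emph{unshaded blocks} of one or two apex half-edges, with an unshaded block having size one precisely when the two surrounding clauses assign $x_i$ opposite polarities. Since zero-cost pairings at $x_i$ match cyclically consecutive half-edges and $d(x_i)$ is even, there are exactly two zero-cost transition systems at $x_i$, corresponding to the two parities of consecutive-pair matchings. A parity argument that tracks cumulative block widths around $x_i$ shows that one of these (call it the \emph{positive system}) pairs together the two triangle half-edges of every clause in which $x_i$ appears positively, while pairing each triangle half-edge of a negative-polarity clause at $x_i$ with an adjacent apex half-edge; the other (the \emph{negative system}) does the reverse. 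I will then identify truth assignments with system choices by declaring $x_i=\text{false}\leftrightarrow$ positive system at $x_i$ and $x_i=\text{true}\leftrightarrow$ negative system at $x_i$, so that the two triangle half-edges of a clause $C$ are paired together at $x_i$ (a \emph{shortcut at $x_i$ for $C$}) precisely when the literal of $x_i$ in $C$ evaluates to false.

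With this correspondence, the triangle for a clause $C$ decomposes as a standalone 3-cycle in the walk decomposition induced by the chosen systems if and only if the shortcut at $C$ is used at all three of its vertices, if and only if all three literals of $C$ are false, if and only if $C$ is unsatisfied. Thus the choices at the $x_i$'s avoid every standalone triangle exactly when the corresponding assignment satisfies $I$. For the direction ``satisfying assignment $\Rightarrow$ zero-cost circuit'', install the systems accordingly (each of cost zero); since no triangle is standalone, every induced closed walk passes through $u$, so the walks can be described as apex-paths whose endpoints form a perfect matching $M_1$ on the $2m\geq 2$ half-edges at $u$. A standard argument supplies a second perfect matching $M_2$ such that $M_1\cup M_2$ is a single Hamilton cycle on those half-edges; taking $M_2$ as the transition system at $u$ (which is automatically zero cost) splices all apex-paths into a single zero-cost Eulerian circuit. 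For the reverse direction, read off from a given zero-cost Eulerian circuit which of the two zero-cost systems is used at each $x_i$, define the assignment, and note that the circuit being a single closed trail forbids any standalone triangle, so by the correspondence each clause contains a true literal.

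The main obstacle is the local parity analysis at $x_i$: verifying carefully that the two consecutive-pair matchings, interacting with shaded and unshaded blocks of widths that vary (one or two) according to polarity switches of $x_i$ across consecutive clauses, really do pick out exactly the positive-polarity or negative-polarity triangles. Once this local dichotomy is pinned down, the apex-gluing step is routine, reducing to the fact that any perfect matching on $2m\geq 2$ points can be extended to a single Hamilton cycle by a second perfect matching, and both directions of the biconditional follow cleanly.
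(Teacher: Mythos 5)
Your proposal is correct and follows essentially the same route as the paper's proof: the same two-way dichotomy of zero-cost transition systems at each variable vertex (via the parity of the interspersed apex edges), the same identification of ``shortcutting a triangle at all three vertices'' with an unsatisfied clause, and the same splicing of the resulting $u$-to-$u$ paths through the zero-cost apex vertex. You merely make two steps more explicit than the paper does --- the block-parity analysis at $x_i$ and the formulation of the apex gluing as completing a perfect matching to a single cycle --- but these are elaborations of the paper's argument rather than a different one.
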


\begin{proof}
Suppose there is a zero-cost solution to the associated turning cost problem. Then, at each vertex $x_i$, only neighbouring pairs of edges can appear consecutively in the Eulerian circuit as these are the only pairs with turning cost zero. Thus, for each $x_i$, because of the parity of the interspersed $x_i u$ edges, there are only two possible configurations for the Eulerian circuit to follow through the vertex $x_i$: one where it joins half-edges of the triangles with positive literal $x_i$'s in their labelling clauses, and one where it joins half-edges of triangles with negative literals  $\neg x_i$ in their labelling clauses, as in Figure~\ref{f.st}.

\begin{figure}
\centering
\hspace{10mm}
\subfigure[$x_i$ is true.]{
\labellist \small\hair 2pt
\pinlabel {$(x_i \! \! \lor \! \!  x_p \! \! \lor \! \!   x_q)$} [r] at   42 92
\pinlabel {$( x_i \! \! \lor \! \!  x_l \! \! \lor \! \!   x_m)$} [l] at   114 92
\pinlabel {\rotatebox{90}{$( \lnot x_i \! \! \lor \! \!  x_n \! \! \lor \! \!   x_r)$}}  at   78 6
\endlabellist
\raisebox{10mm}{\includegraphics[scale=.8]{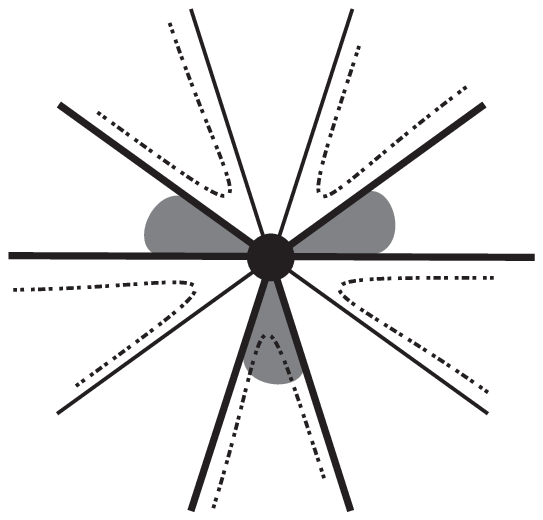}}
 \label{f.st1}
}
\hspace{10mm}
\subfigure[$x_i$ is false.]{
\labellist \small\hair 2pt
\pinlabel {$(x_i \! \! \lor \! \!  x_p \! \! \lor  \! \!  x_q)$} [r] at   42 92
\pinlabel {$( x_i \! \! \lor \! \!  x_l \! \! \lor \! \!   x_m)$} [l] at   114 92
\pinlabel {\rotatebox{90}{$( \lnot x_i \! \! \lor \! \!  x_n \! \! \lor \! \!   x_r)$}}  at   80 6
\endlabellist
\raisebox{10mm}{\includegraphics[scale=.8]{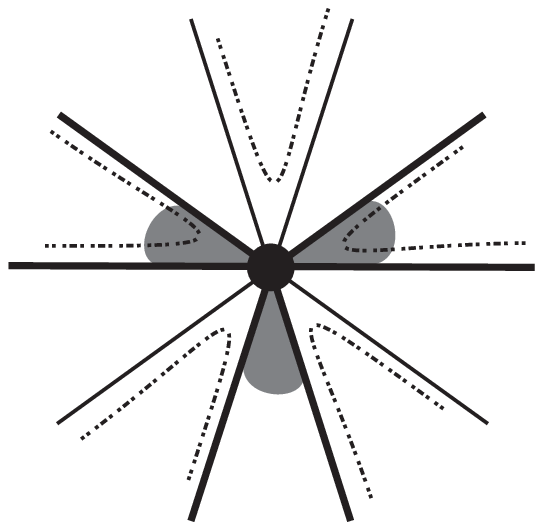}}
 \label{f.st2}
}
\caption{These are the only two possible zero-cost transitions at a vertex $x_i$.  Dotted lines in the left image represent the Eulerian circuit configurations at the vertex $x_i$ corresponding to variable $x_i$ set to true, hence connecting $\lnot x_i$ triangles. The image on the right represents $x_i$ when $x_i$ is set to false.}
\label{f.st}
\end{figure}

In the cases where the triangles with $x_i$ in their labelled clauses are followed, we assign a value of ``false" to $x_i$. In the case that the edges in the $\neg x_i$ labelled triangles are followed, we assign a value of ``true" to $x_i$, again as in Figure \ref{f.st}.

We claim that this is a solution to the given 3-SAT problem. If not, then one of the clauses is false, and we examine the triangle labelled by that clause. There are four cases, depending on how many positive and negative literals are in the clause.  In each case, since the clause is false,  the positive literals must be set to false, and the negative literals set to true.  However, a positive literal set to false corresponds to the Eulerian circuit following the shaded region of a triangle, as does a negative literal set to true.  Thus, in all cases, a disjoint 3-cycle results, and since this cannot occur in an Eulerian circuit, we have contradicted the fact that we have a solution to the associated turning cost problem.

For the converse, we need to show that if there is a solution to the 3-SAT problem, then the associated turning cost problem has a zero-cost solution.  This follows from construction, since we can examine each variable $x_i$ in the 3-SAT expression, and assign the  vertex $x_i$ one of the two transition systems as shown in Figure \ref{f.st} according to whether the variable is assigned a value of true or false in the given solution to the 3-SAT problem.  Both of these transitions systems have cost zero. We then just need to check that these transition systems may always be extended to an Eulerian circuit. This follows since, as in the previous argument, any closed triangle would correspond to an unsatisfied clause, of which there are none.  Thus, each set of edges in any triangle must form a set of paths, each of which continues to edges incident with $u$ (see Figure~\ref{f.pat}).   Since all the paths begin and end at $u$, where every pairing of half-edges has cost zero, we can simply concatenate these paths in any order through $u$ to get a zero-cost solution to the associated turning cost problem.
\hfill$\qed$
\end{proof}

\begin{figure}
\centering
\includegraphics[scale=.6]{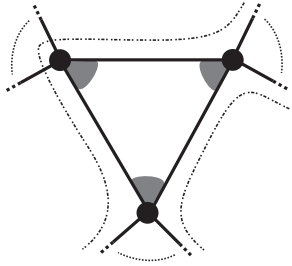}
\caption{Subpaths in an Eulerian circuit around a triangle, recalling that the Euler circuit follows turns with cost zero, i.e. using consecutive edges about a vertex.}
\label{f.pat}
\end{figure}

\begin{corollary}\label{c1}
 Problem~\ref{prob:Eulerian decision}  is NP-complete.
 \end{corollary}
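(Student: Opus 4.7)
The plan is to combine Theorem \ref{thrm:3SAT equiv} with Proposition \ref{p.polyt} to obtain NP-hardness, and then verify membership in NP by exhibiting a polynomial-time certificate checker. Since Problem \ref{prob:zero one decision} is the special case of Problem \ref{prob:Eulerian decision} in which the turning costs lie in $\{0,1\}$ and $c=0$, any polynomial-time algorithm for Problem \ref{prob:Eulerian decision} would decide Problem \ref{prob:zero one decision} as well.

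For NP-hardness, I would start from an arbitrary instance $I$ of 3-SAT with $n$ variables and $r$ clauses, apply the construction of Subsection \ref{sec:associated graph} to produce $G_I$, and observe by Proposition \ref{p.polyt} that this is done in time $O(nr^2)$, which is polynomial in the size of $I$. By Theorem \ref{thrm:3SAT equiv}, $I$ is satisfiable if and only if $G_I$ admits a zero-cost Eulerian circuit, i.e.\ if and only if the instance $(G_I, 0)$ is a yes-instance of Problem \ref{prob:Eulerian decision}. This gives a polynomial-time many-one reduction from 3-SAT to Problem \ref{prob:Eulerian decision}, so Problem \ref{prob:Eulerian decision} is NP-hard.

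For membership in NP, I would use an Eulerian circuit $C$ of $G$ as a certificate. The length of $C$ is $|E(G)|$, which is polynomial in the size of the input. Verification consists of checking that $C$ uses each edge exactly once and is closed (a straightforward linear-time sweep), then reading off the transition systems $T_C(v)$ determined by the consecutive half-edge pairs in $C$, summing the given turning costs $w_v(e,f)$ to compute $w(C)$, and comparing with the threshold $c$. Each of these steps runs in polynomial time in the size of the input, so Problem \ref{prob:Eulerian decision} lies in NP.

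The two ingredients together give NP-completeness of Problem \ref{prob:Eulerian decision}. There is no real obstacle here since the heavy lifting has already been done in Theorem \ref{thrm:3SAT equiv} and Proposition \ref{p.polyt}; the only point that needs a brief word is the observation that the zero-cost special case is indeed an instance of Problem \ref{prob:Eulerian decision} (take $c=0$), so that the reduction from 3-SAT lands inside the decision problem we care about.
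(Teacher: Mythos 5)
Your proposal follows the same route as the paper: NP-hardness via the polynomial-time reduction from 3-SAT furnished by Proposition~\ref{p.polyt} and Theorem~\ref{thrm:3SAT equiv} (with the zero-cost instance realized as the case $c=0$), and membership in NP by checking a candidate Eulerian circuit and computing its turning cost in polynomial time. The paper's own proof is just a terser statement of exactly these two ingredients, so your argument is correct and essentially identical.
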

 \begin{proof}
 By Theorem~\ref{thrm:3SAT equiv}, 3-SAT is polynomial time reducible to Problem \ref{prob:Eulerian decision}, and 3-SAT is well-known to be NP-hard. Furthermore, Problem \ref{prob:Eulerian decision} is clearly contained in NP since the turning cost of a given solution may be computed in polynomial time.
\hfill$\qed$
\end{proof}

\begin{corollary}\label{c2}
Problem \ref{prob:Eulerian Costs} is NP-hard.
\end{corollary}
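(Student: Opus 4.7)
The plan is to derive Corollary \ref{c2} directly from Corollary \ref{c1} by a trivial Turing reduction: any polynomial time algorithm for the optimization Problem \ref{prob:Eulerian Costs} immediately yields one for the decision Problem \ref{prob:Eulerian decision}. Since Corollary \ref{c1} established that Problem \ref{prob:Eulerian decision} is NP-complete (and in particular NP-hard), this will force Problem \ref{prob:Eulerian Costs} to be NP-hard as well.

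Concretely, I would argue as follows. Suppose, for contradiction, that there is a polynomial time algorithm $\mathcal{A}$ that, given any Eulerian graph $G$ with turning costs, outputs an Eulerian circuit $C^\ast$ of minimum cost $w(C^\ast)$. Then given an instance $(G, c)$ of Problem \ref{prob:Eulerian decision}, one can simply run $\mathcal{A}$ on $G$ to obtain $C^\ast$, compute $w(C^\ast)$ by summing the turning costs along $C^\ast$ (which takes time linear in $|E(G)|$), and output ``yes'' if and only if $w(C^\ast) \leq c$. This is a polynomial time decision procedure for Problem \ref{prob:Eulerian decision}, contradicting Corollary \ref{c1} under the assumption $\mathrm{P} \neq \mathrm{NP}$; formally, it shows that Problem \ref{prob:Eulerian decision} Turing-reduces in polynomial time to Problem \ref{prob:Eulerian Costs}, so the NP-hardness of the former transfers to the latter.

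There is really no obstacle here: the step that might look delicate is verifying that the minimum cost $w(C^\ast)$ can indeed be computed in polynomial time from the circuit $C^\ast$ returned by $\mathcal{A}$, but this is immediate from the definition $w(C) = \sum_{v \in V(G)} w(T_C(v))$ since $C^\ast$ has length $|E(G)|$ and hence determines at most $|E(G)|$ pairings to be summed. The entire argument is a textbook reduction from decision to optimization, so the proof will be a single short paragraph invoking Corollary \ref{c1} and the observation explicitly noted in the text immediately preceding Problem \ref{prob:zero one decision}.
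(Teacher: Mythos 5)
Your proposal is correct and follows essentially the same route as the paper, which likewise derives Corollary~\ref{c2} from Corollary~\ref{c1} via the standard decision-to-optimization reduction (already flagged in the text preceding Problem~\ref{prob:zero one decision}); you merely spell out the Turing reduction that the paper states in one sentence.
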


\begin{proof}
If an optimization problem has a corresponding decision problem that is NP-Complete, the optimization problem must be NP-hard.
\hfill$\qed$
\end{proof}

It is convenient at this point make the following technical observation which we will use  in Section~\ref{low}.
\begin{observation}\label{l.deg}
Given a 3-SAT instance $I$, we may assume without loss of generality that any vertex $x_i$ in the $G_I$,   Eulerian graph with turning costs of $I$ that is used in the proof of Theorem~\ref{thrm:3SAT equiv},  has a degree that is not divisible by 4.
\end{observation}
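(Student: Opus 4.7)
The plan is to show that the recipe of Section~\ref{sec:associated graph} has enough slack that, for any 3-SAT instance $I$, we can ensure $d(x_i)\not\equiv 0\pmod 4$ at every variable vertex of $G_I$. The first step is a local degree computation. Writing $k_i$ for the number of clauses containing $x_i$ (in either polarity) and $s_i$ for the number of unshaded sectors at $x_i$ that are bounded by shaded sectors carrying literals of opposite sign, the triangles contribute $2k_i$ half-edges at $x_i$ while the apex edges contribute $s_i+2(k_i-s_i)=2k_i-s_i$ more. Hence
\[
d(x_i)\;=\;2k_i+(2k_i-s_i)\;=\;4k_i-s_i .
\]
Since sign switches around a cyclic sequence come in pairs, $s_i$ is always even; and $4\mid d(x_i)$ precisely when $s_i\in\{0,4,8,\dots\}$. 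So the goal reduces to arranging things so that $s_i=2$ at every variable vertex.

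The second step dispatches the degenerate case $s_i=0$, which occurs exactly when $x_i$ appears with only one polarity in $I$. Here I would apply the standard \emph{pure literal rule} as polynomial-time preprocessing: whenever a variable $x_i$ appears only positively (resp.\ only negatively), set $x_i=T$ (resp.\ $F$), delete every clause the assignment satisfies, and iterate until no pure literals remain. The resulting instance $I'$ is equisatisfiable with $I$ and has the property that every variable appears with both polarities; replacing $I$ by $I'$ throughout Subsection~\ref{polytime translate} leaves the statement of Theorem~\ref{thrm:3SAT equiv} intact, so without loss of generality every variable of $I$ appears in both polarities.

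The third step uses the freedom, explicitly noted in Section~\ref{sec:associated graph}, that the cyclic order of the triangles about each vertex is not prescribed by the construction. For each $x_i$ I would order its shaded sectors so that all clauses containing the literal $x_i$ form one contiguous block in the cyclic order and all clauses containing $\neg x_i$ form the complementary block. Such an arrangement is available precisely because both polarities now occur. It produces exactly two sign switches around $x_i$, so $s_i=2$ and $d(x_i)=4k_i-2\not\equiv 0\pmod 4$, as required.

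The step I expect to require the most care is verifying that these two modifications---pure-literal preprocessing and the prescribed block cyclic ordering---do not damage the reduction. For the preprocessing this is a standard and well-known fact about SAT. For the ordering, the key point is that the argument of Theorem~\ref{thrm:3SAT equiv} depends only on the local pairing structure at each variable vertex and on the parity of the interspersed apex edges, and never on any particular cyclic arrangement of the triangles; so the same two ``true/false'' zero-cost transition systems at each $x_i$ remain the only ones available, and the equivalence between satisfying assignments and zero-cost Eulerian circuits carries over verbatim.
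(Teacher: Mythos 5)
Your proof is correct, but it reaches the conclusion by a genuinely different route from the paper. The paper's fix is purely local and post hoc: whenever $d(x_i)\equiv 0\pmod 4$, it adds two extra parallel edges from $x_i$ to the apex vertex $u$, bounding a new unshaded region, which bumps the degree to $\equiv 2\pmod 4$; the cost of this is that the resulting graph is no longer literally the graph of Section~\ref{sec:associated graph}, so the paper must (and does) argue that the two added edges preserve the parity of every unshaded region and hence do not change the set of zero-cost transition systems at $x_i$, so that Proposition~\ref{p.polyt}, Theorem~\ref{thrm:3SAT equiv} and its corollaries survive. You instead compute $d(x_i)=4k_i-s_i$ and observe that the only obstruction is the number of sign switches $s_i$ being $\equiv 0\pmod 4$; you then kill the $s_i=0$ case by pure-literal preprocessing of the formula and force $s_i=2$ everywhere else by exploiting the freedom, explicitly granted in Section~\ref{sec:associated graph}, to choose the cyclic order of the triangles about each vertex. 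What your approach buys is that the final graph is exactly an output of the unmodified construction, so nothing about the reduction needs re-verification beyond the (standard) equisatisfiability of the preprocessed instance; what it costs is that the ``without loss of generality'' now lives partly in the SAT instance rather than entirely in the graph, and you should say a word about the degenerate case where pure-literal elimination empties the formula (map such instances to a fixed trivially satisfiable target). Your degree bookkeeping is right, including the fact that $s_i$ is even in a cyclic sign sequence, and your closing remark that the proof of Theorem~\ref{thrm:3SAT equiv} depends only on the parity of apex edges in each unshaded region, not on the cyclic arrangement of triangles, is exactly the point that makes the reordering harmless.
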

\begin{proof}
If the degree of $x_i$ is divisible by 4, then modify the graph by taking two parallel copies of any edge $(x_i,u)$ from $x_i$ to the apex vertex and making the region between them in the neighborhood of $x_i$ unshaded.  This increases the degree by two, without changing the parity of the number of edges in any unshaded region.  Thus, we are still able to distinguish between the two possible transitions at the vertex $x_i$ (consider Figure~\ref{f.st} with the two additional edges added), so the additional edges do not affect the  ways in which a zero-cost Eulerian circuit can follow the  triangles corresponding to the clauses,  and the construction of the graph is still polynomial time. Thus the proofs of Proposition~\ref{p.polyt}, Theorem~\ref{thrm:3SAT equiv}, and Corollaries~\ref{c1} and~\ref{c2} still hold with the modified graph.
\hfill$\qed$
\end{proof}

\section{Reformulating Problem \ref{prob:Eulerian Costs} as a TSP }
Recall that Traveling Salesman Problem (TSP) seeks a minimum cost Hamilton cycle in a graph with edge weights, that is, a cycle that visits each vertex of the graph with a minimum sum of the edge weights of the edges used in the cycle.
The best-known algorithm for the general version of the TSP is the $O(n^22^n)$ dynamic program described by Held and Karp in 1962 \cite{HK}.
 Because of the practical importance of the TSP, there is a rich history of computational work, including heuristics, integer programming solutions, genetic algorithms, and simulated annealing algorithms (see \cite{LL85} for a survey).
There are good approximations for special cases of the TSP.  For example, in the case of the metric TSP (where costs are non-negative, symmetric, and obey the triangle inequality), there is a simple $3/2$-approximation algorithm, i.e., an algorithm guaranteed to find a solution, in polynomial time, with cost at most $3/2$ times the optimal cost \cite{Chr76}.  Unfortunately finding a good approximation algorithm for the general TSP is (provably) challenging, since such an algorithm would lead to a solution for the NP-hard Hamiltonian cycle problem.

The relevance of all these results for the TSP here is that the problem of finding a minimum turning cost Eulerian circuit  can be reformulated as a TSP. This  means that the  TSP machinery may  be brought to bear on solving Problem~\ref{prob:Eulerian Costs}, and hence on finding optimal threading routes for the scaffolding strand in DNA origami methods of self-assembly. While the cost data is unlikely to be metric in the case of a turning cost problem, in small instances, such as those likely to arise in practice from DNA self-assembly problems, should be tractable using the Held-Karp algorithm \cite{HK} or an integer programming solution.

We will use line graphs to give the connection between Problem \ref{prob:Eulerian Costs} and the TSP.  Let $G$ be a graph, and  $E(G)=\{e_1, \ldots , e_m\}$ be the edges of $G$. The {\em line graph}, $L(G)$, of $G$ has vertex set $V(L(G))=  E(G)$, and edge set $E(L(G))= \{(e_i,e_j) \mid  e_i \text{ and } e_j  \text{ are adjacent edges  in } G \}$.  Clearly,  $L(G)$ may be constructed in polynomial time from $G$.

 We now solve Problem~\ref{prob:Eulerian Costs}, by reducing it to the TSP.

 \begin{theorem} \label{th:TSP solve}
 Given a graph $G$ with turning costs, there is an associated edge weighted graph that may be constructed in polynomial time such that applying the TSP to this graph yields an optimal Eulerian circuit for $G$.
 \end{theorem}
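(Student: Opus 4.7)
I would reduce Problem~\ref{prob:Eulerian Costs} to the TSP by constructing an auxiliary edge-weighted graph $H$ whose Hamilton cycles correspond, in a weight-preserving way, to Eulerian circuits of $G$. Take $H$ to be a refinement of $L(G)$ in which each vertex of $L(G)$ (equivalently, each edge of $G$) is split into two half-edge copies: $V(H)$ is the set of half-edges of $G$. For every edge $e=(u,v)$ of $G$ add a \emph{crossing} edge joining $h_{e,u}$ and $h_{e,v}$ with weight $0$; and for every vertex $v\in V(G)$ and every pairing $\{(v,e),(v,f)\}$ at $v$ add a \emph{turning} edge joining $h_{e,v}$ and $h_{f,v}$ with weight $w_v(e,f)+M$, where $M := 1+\sum_{v}\sum_{e,f} w_v(e,f)$ is a large penalty. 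Since $|V(H)|=2|E(G)|$ and $|E(H)|\leq |E(G)|+\sum_v\binom{d(v)}{2}$, and $M$ has polynomial bit length, $H$ is constructible in polynomial time in the size of $G$.

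The key claim is a bijection between Eulerian circuits $C$ of $G$ and Hamilton cycles of $H$ that use every crossing edge. An Eulerian circuit $C=e_1 e_2 \cdots e_m e_1$ with $e_i$ traversed from $v_i$ to $v_{i+1}$ lifts to the Hamilton cycle $h_{e_1,v_1}, h_{e_1,v_2}, h_{e_2,v_2}, h_{e_2,v_3}, \ldots, h_{e_m,v_1}, h_{e_1,v_1}$, alternating crossing and turning edges. Conversely, each vertex of $H$ is incident to exactly one crossing edge, so a Hamilton cycle of $H$ using all $|E(G)|$ crossing edges is forced to alternate, and unfolding it produces a closed walk in $G$ that traverses every edge exactly once, i.e., an Eulerian circuit (the connectedness of the Hamilton cycle precludes splitting into several smaller circuits). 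Under this correspondence, the Hamilton cycle weight equals $w(C)+M|E(G)|$.

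Finally I would argue that the TSP optimum on $H$ necessarily uses all crossing edges. Since the crossing edges form a perfect matching of $V(H)$, a Hamilton cycle of $H$ uses some number $k\leq |E(G)|$ of them and $2|E(G)|-k$ turning edges; if $k<|E(G)|$ its weight is at least $M(|E(G)|+1)$, which by the choice of $M$ exceeds the weight $M|E(G)|+w(C)$ of the Hamilton cycle obtained from any Eulerian circuit of $G$ (and such circuits exist, as $G$ is Eulerian). Hence the TSP-optimal Hamilton cycle uses all crossing edges and, by the bijection, yields a minimum turning-cost Eulerian circuit of $G$; the minimum cost itself is recovered by subtracting $M|E(G)|$. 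The main obstacle is that Hamilton cycles of the unrefined line graph $L(G)$ need not correspond to Eulerian circuits, because the clique at a vertex $v\in V(G)$ can be traversed by a path of clique-edges that does not pair half-edges into a proper transition system; splitting line-graph vertices into two half-edge copies together with the penalty $M$ is precisely the device that restores this correspondence, and it is the cost accounting across the bijection that needs to be verified carefully.
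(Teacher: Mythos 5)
Your proof is correct, and it is a close cousin of the paper's argument rather than an identical one. The paper also reduces to the TSP via a line-graph construction, but instead of a big-$M$ penalty it subdivides every edge of $G$ twice to form a simple graph $G_2$, transfers each turning cost $w_v(e,f)$ to the pair of subdivision edges nearest $v$ (with cost zero for all other pairs), and takes $L(G_2)$: each edge $e=(u,v)$ of $G$ yields three vertices $e_u, e_c, e_v$ of $L(G_2)$, and the middle vertex $e_c$ has degree $2$, so any Hamilton cycle is forced to contain the path $e_u e_c e_v$. These forced paths play exactly the role of your weight-zero crossing edges, but the forcing is structural rather than economic: the paper obtains an exact weight-preserving bijection between \emph{all} Hamilton cycles of $L(G_2)$ and all Eulerian circuits of $G$, with no offset and no final optimality argument. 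Your version is slightly more compact ($2|E(G)|$ versus $3|E(G)|$ auxiliary vertices) but pays for it with large edge weights, an $M|E(G)|$ offset in the objective, and the extra counting step showing that a tour missing some crossing edge must contain at least $|E(G)|+1$ turning edges; that step is right, since the crossing edges form a perfect matching of $V(H)$ and so every vertex contributes at least one turning edge to any Hamilton cycle. One small loose end: when $G$ has a loop at $v$, your $H$ acquires a pair of parallel edges (the crossing edge and the turning edge for the self-pairing of the loop's two half-edges). This does no harm---your penalty already excludes the turning copy from any optimal tour---but note that the paper's double subdivision is chosen precisely so that $G_2$, and hence $L(G_2)$, is simple and every walk is determined by its vertex sequence. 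Both constructions are clearly polynomial, so either one proves the theorem.
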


 \begin{proof}
Given an Eulerian graph $G$ equipped with a set of turning costs at each vertex, we first form another graph $G_2$ by subdividing all of the edges of $G$ twice, that is, adding two vertices to each edge of $G$, as in Figure~\ref{f.descb}. Note that even if $G$ has loops or multiple edges, $G_2$ has none. We assign pair costs to $G_2$ as follows. Each edge $e=(u,v)$ of $G$ gives rise to three edges in $G_2$. Call these $e_u, e_c, e_v$, where $e_u$ is incident with  $u$ and $e_v$ is incident with  $v$ (if $u=v$ denote these two edges by $e_{u_1}$ and $e_{u_2}$), and $e_c$ is the remaining edge in the center. Let the turning cost of the pair $(e_v,f_v)$ in $G_2$ equal the turning cost $w_v(e,f)$ in $G$.  Set the turning cost of all other pairs of edges in $G_2$ (these are the edges of the form $(e_v,e_c)$ for some $v$) to zero.  Note that there is a one-to-one correspondence between the Eulerian circuits of $G$ and those of $G_2$, and, moreover, the corresponding Eulerian circuits have the same turning costs.  Clearly $G_2$ with its edge costs may be constructed from $G$ in polynomial, even linear, time.

We now form the line graph, $L(G_2)$, again in polynomial time.  As $G_2$ has no loops or multiple edges, each pair of edges $e_v,f_v$  at $v$ corresponds to a unique edge   $(e_v,f_v)$ in $L(G_2)$.  Assign weights to the edges of $L(G_2)$ as follows.
 If $e$ and $f$ are adjacent edges in $G_2$, then they have an associated turning cost  $w_v(e,f)$.  We give the corresponding edge $(e,f)$ in $L(G_2)$  weight $ w_v(e,f)$.  See Figure~\ref{f.lg}.

 Every walk in $G_2$ is uniquely determined by  its sequence of edges, and, as $L(G_2)$ has no multiple edges, every walk in $L(G_2)$ is determined by a sequence of vertices. If $C$ is an Eulerian circuit in $G_2$ then it is well-known (or see \cite{Ch,HNW}) that $C$ defines a Hamiltonian cycle $H$  in $L(G_2)$.  By construction, the  costs of $C$ in $G_2$, and the weights of $H$ in $L(G_2)$ are equal.
 Conversely, let $H$ be a Hamiltonian cycle in $L(G_2)$.
  Since each vertex  $e_c$ of $L(G_2)$ is of degree 2,  $H$ is of the form  $e^1_{u_1}e^1_{c_1} e^1_{v_1}e^2_{u_2}  \cdots e^m_{c_m} e^m_{v_m}$, where $v_i=u_{i+1}$ and the indices are taken modulo $m$. From this, it is easily seen that the  corresponding walk in $G_2$ is an Eulerian circuit of $G_2$ of cost equal to the weight of $H$. Thus, there is a bijection between Hamiltonian cycles in $L(G_2)$ and  Eulerian circuits in  $G_2$, and hence also the Eulerian circuits in $G$. As this bijection maps costs to weights directly, if $H$ is  a solution to the TSP in $L(G_2)$, then the corresponding Eulerian circuit in $G_2$ gives a minimum turning cost Eulerian circuit of $G_2$ and hence of $G$.
  \hfill$\qed$
  \end{proof}

\begin{figure}
\centering
\subfigure[ $G$.]{
\labellist \small\hair 2pt
\pinlabel {$e$} at   56 144
\pinlabel {$u$} at    33 61
\pinlabel {$v$} at    33 204
\pinlabel {$f$} at   18 228
\pinlabel {$g$} at    56 245
\pinlabel {$h$} at   75  228
\endlabellist
\raisebox{0mm}{\includegraphics[height=5cm]{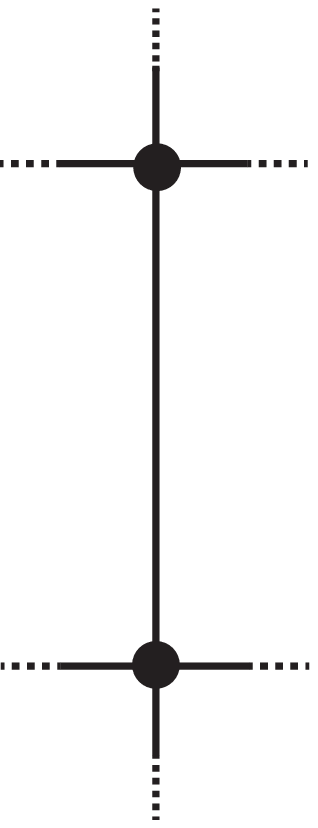}}
 \label{f.desca}
}
\hspace{15mm}
\subfigure[$G_2$, the subdivided graph.]{
\labellist \small\hair 2pt
\pinlabel {$e_c$} at   58 143
\pinlabel {$e_u$} at   58 95
\pinlabel {$e_v$} at   58 185
\pinlabel {$u$} at    33 61
\pinlabel {$v$} at    33 204
\pinlabel {$f_v$} at   18 228
\pinlabel {$g_v$} at    60 247
\pinlabel {$h_v$} at   75  228
\endlabellist
\includegraphics[height=5cm]{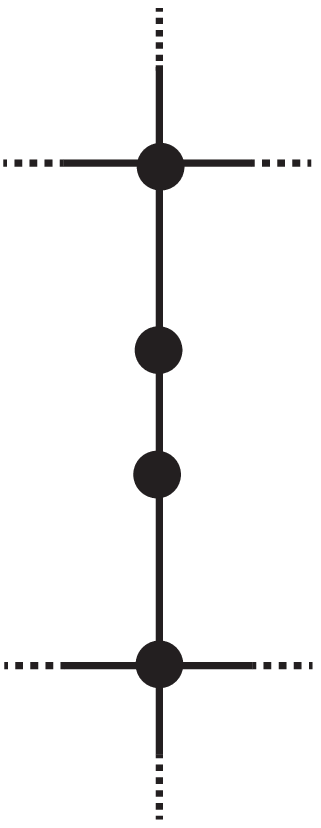}
 \label{f.descb}
}
\hspace{20mm}
\subfigure[$L(G_2)$ with some of its edge weights shown.]{
\labellist \small\hair 2pt
\pinlabel {$w_v(f_v,e_v)$} [r] at    11 176
\pinlabel {$w_v(f_v,g_v)$} [r] at    11 253
\pinlabel {$w_v(g_v,h_v)$} [l] at    142 257
\pinlabel {$w_v(f_v,h_v)$} [l] at     142 238
\pinlabel {$w_v(g_v,e_v)$} [l] at     142 187
\pinlabel {$w_v(h_v,e_v)$} [l] at     142 171
\pinlabel {$0$} [l] at     142 147
\pinlabel {$0$} [l] at     142 110
\endlabellist
\includegraphics[height=5cm]{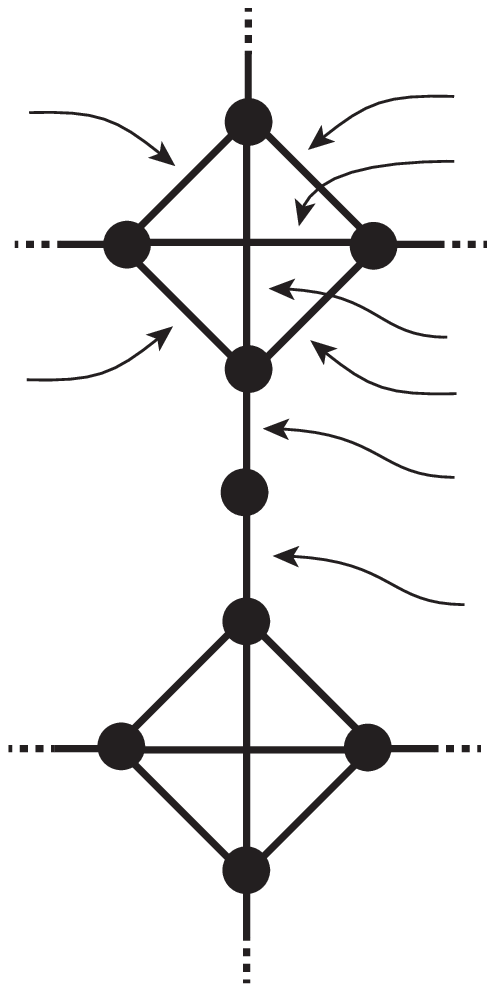}
 \label{f.lg}
}

\caption{A graph $G$,  $G_2$, and $L(G_2)$ with some edge labels and edge-weights shown.}
\label{f.line}
\end{figure}

 Thus we have formulated the problem of finding a minimum turning cost Eulerian circuit, Problem~\ref{prob:Eulerian Costs}, as a TSP. Note that the above argument does not reduce the TSP to Problem~\ref{prob:Eulerian Costs} since not every Hamiltonian graph is a line graph.

\section{Some special cases }

Often special instances of intractable problems may be solved efficiently, and that is true here as well.  We see below that an optimal Eulerian circuit may be found in polynomial time for 4-regular plane graphs with no crossing transitions.  It is fortuitous that this case is tractable, as many likely graph structured targets for DNA origami assembly, for example lattice subsets and cages, are planar, while requiring that a scaffolding strand and staples follow faces without crossing over one another respects the physical constraints of DNA.
On the other hand however, we will also see that the problem remains NP-hard even if we restrict to the class of graphs with maximum degree 8.

\subsection{4-regular plane graphs with no crossing transitions}

If $v$ is a vertex in  a 4-regular plane graph, then it has three transition systems, determined by the embedding in the plane,  as in Figure~\ref{4trans}.  In this section, we will assume the crossing transition systems are prohibited. (This can be done by assigning the pairs that comprise them  large turning costs, in particular, larger than the sum of all the turning costs of other non-crossing transition systems.)

\begin{figure}
\centering
\begin{tabular}{ccccccc}
\labellist \small\hair 2pt
\pinlabel {$v$}  at 36 22
\endlabellist
\includegraphics[scale=.5]{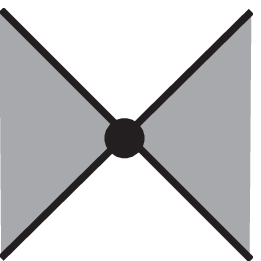}
 & \quad \raisebox{6mm}{$\longrightarrow$} \quad  & \includegraphics[scale=.5]{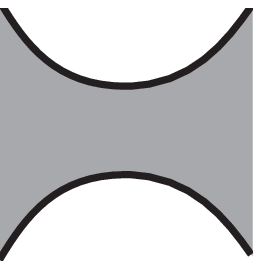} &\hspace{5mm}  & \includegraphics[scale=.5]{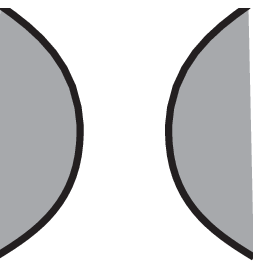} &\hspace{5mm}  &\includegraphics[scale=.5]{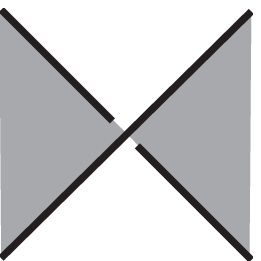} \\
  && white smoothing && black smoothing && crossing.
\end{tabular}
\caption{The three transition systems of a vertex $v$ in a face two colored 4-regular plane graph.}
\label{4trans}
\end{figure}

More generally, if $G$ is an Eulerian graph embedded in some surface, then an {\em A-trail} (or a {\em non-intersecting Eulerian circuit}) of $G$ is an Eulerian circuit in which consecutive edges  in the circuit,  $(v_{i-1},v_i)$ and $(v_i,v_{i+1})$ say, are adjacent in the cyclic ordering of the edges incident to $v_i$. (Thus an Eulerian circuit of a 4-regular plane graph that has no crossing transitions is an A-trail.)
In \cite{Kot66}, Kotzig proved that every 4-regular plane graph contains an A-trail. However,  Bent and Manber, in  \cite{Bent87}, showed that dropping the 4-regularity requirement results in a problem that is NP-complete, i.e.,  the problem of deciding if an Eulerian plane graph contains an A-trail is NP-complete. This remains the case even when restricted to simple, 3-connected graphs with only 3-cycles and 4-cycles as face boundaries (see  \cite{And95}), although  a polynomial-time algorithm for finding A-trails in simple 2-connected outerplane Eulerian graph was given in \cite{And98}.
 Andersen, Bouchet and Jackson \cite{And96} characterised all 4-regular plane graphs that have two orthogonal  A-trails, where two A-trails of $G$ are {\em orthogonal} if the two trails have different transitions  at each vertex of $G$.   Furthermore the complexity of the related problem of finding Eulerian circuits on 4-regular graphs in which only crossing transitions are allowed (which corresponds to finding Eulerian Petrie walks in an underlying graph) has been studied by \v{Z}itnik in \cite{Zit}.

In light of these results, it is in general non-trivial to determine the minimal cost A-trail when turning costs are assigned to the graph. However, we demonstrate below that it can be accomplished in polynomial time for all 4-regular plane graphs.

\begin{theorem}  If $G$ is a 4-regular plane graph with a set of turning costs such that the crossing transitions are prohibited, then an optimal Eulerian circuit may be found in polynomial time.
\end{theorem}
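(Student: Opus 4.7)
The plan is to reduce the problem to a minimum-weight spanning tree computation in an auxiliary multigraph built from the face structure of $G$. First, since $G$ is $4$-regular and plane (hence Eulerian), its faces admit a proper $2$-coloring, say into black and white faces. At every vertex $v$, the two non-crossing transitions then have a clean interpretation: the \emph{white smoothing} pairs each half-edge with the half-edge it meets across a white-face corner, while the \emph{black smoothing} does the same with black-face corners. Let $w_W(v)$ and $w_B(v)$ denote the two corresponding turning costs at $v$.

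Next I would form the \emph{white face multigraph} $F_W$: its vertices are the white faces of $G$, and for every vertex $v$ of $G$ it has an edge $e_v$ joining the two (not necessarily distinct) white faces meeting at $v$. The key combinatorial step is to establish the following bijection, which is a classical fact in the A-trail and Martin polynomial literature (in the spirit of Kotzig, Fleischner, and Martin): spanning trees of $F_W$ are in bijection with A-trails of $G$, with a spanning tree $T \subseteq E(F_W)$ sending to the A-trail in which vertex $v$ receives the black smoothing iff $e_v \in T$. I would prove this by showing that, starting from the all-white transition system (which yields exactly one closed walk per white face), flipping a vertex $v$ from white to black either merges the two currently distinct walks meeting at $v$ or splits a single walk at $v$ into two, matching exactly the union-find behaviour of adding $e_v$ to the current subgraph of $F_W$. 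Consequently, the chosen transitions produce a single closed walk if and only if the set of flipped vertices corresponds to a spanning tree of $F_W$.

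Given this bijection, the total turning cost of the A-trail corresponding to a spanning tree $T$ is
\[ \sum_{v \in V(G)} w_W(v) \;+\; \sum_{e_v \in T} \bigl( w_B(v) - w_W(v) \bigr), \]
so minimizing over A-trails is equivalent, up to the additive constant $\sum_{v} w_W(v)$, to finding a minimum-weight spanning tree of $F_W$ with edge-weight $w_B(v) - w_W(v)$ on $e_v$. Both $F_W$ and the weights are computable from $G$ in polynomial (indeed linear) time, and minimum-weight spanning tree (with arbitrary real weights) is solvable in polynomial time by Kruskal's algorithm, so minimum-cost A-trail is as well.

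The principal obstacle in this plan is the rigorous justification of the spanning-tree bijection, in particular the claim that flipping a single transition changes the number of closed walks by exactly $\pm 1$ in a manner dictated by the component structure in $F_W$. Some care is also needed for degenerate cases where $F_W$ has loops (vertices of $G$ whose two white-face corners lie in the same white face): such loops never appear in any spanning tree, which forces white smoothings at the corresponding vertices. This is handled transparently by the spanning tree algorithm, so once the bijection is in hand the rest of the argument is routine.
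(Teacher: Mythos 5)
Your proposal is correct and follows essentially the same route as the paper: the paper forms the Tait (black-face) graph $F$, invokes the classical bijection between its spanning trees and the non-crossing Eulerian circuits of the 4-regular plane graph, puts weight $a_v-b_v$ (difference of the two smoothing costs) on the edge corresponding to the vertex $v$, and runs a minimum spanning tree algorithm. Your white-face multigraph $F_W$ is just the planar dual of that Tait graph, and since spanning trees of planar dual graphs are complementary your ``black smoothing iff $e_v\in T$'' convention is the same correspondence with the roles of the two smoothings swapped; the only substantive difference is that the paper cites the spanning-tree/A-trail bijection as known (Las Vergnas, Richter) rather than reproving it via the merge--split argument you sketch.
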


\begin{proof}
We recall that every 4-regular plane graph $G$ is the medial graph of its Tait graph (or blackface graph), as in Figure \ref{taitgraph}  (see, for example, \cite{EMM} for details).  The Tait graph, $F$, is constructed by face 2-colouring $G$ using the colors black and white such that the unbounded region is colored white, and placing a vertex of $F$ in the interior of each black face. (Note that $G$ is face 2-colourable as it is plane and 4-regular.) There is an edge between two vertices in $F$ whenever the two regions corresponding to the vertices have a shared vertex of $G$ on their boundary.  The edge is drawn between the two vertices of $F$, passing through this shared vertex of $G$.  Thus, there is a one-to-one correspondence between the edges of $F$ and the vertices of $G$, and if $v$ is a vertex of $G$, we label the corresponding edges of $F$ by $e_v$.

The face 2-coloring of $G$ allows us to distinguish the two non-crossing transition systems at each vertex as either a black smoothing or a white smoothing, as in Figure \ref{4trans}. (The term smoothing derives from standard terminology in knot theory.) It is well-known that for plane graphs there is a one-to-one correspondence between the spanning trees of $F$ and the Eulerian circuits of $G$ (see e.g. \cite{Las81} or \cite{Ric91}).  The correspondence identifies an edge $e_v$ in a spanning tree of $F$  with a white smoothing at $v$ in the Eulerian circuit of $G$, and an edge $e_u$ not in the spanning tree with a black smoothing at $u$ in the Eulerian circuit.  Again, see Figures \ref{4trans} and \ref{taitgraph}.

Suppose for each vertex $v$ in $G$, the cost for the white smoothing is $a_v$, while the cost for the black smoothing is $b_v$. Then we assign the value $a_v-b_v$ to the edge $e_v$ in $F$.   Now suppose $C$ is an Eulerian circuit without crossing in $G$, and  let $I$ be the vertices of $G$ which have a white smoothing in $C$.  We can see that the total cost of the Eulerian circuit will be,
\[\sum_{v\in I} a_v +\sum_{v\notin I} b_v=\sum_{v\in I} \left(a_v-b_v\right) +\sum_{v\in V} b_v.\]

However, because of the correspondence between Eulerian circuits of $G$ and the spanning trees of $F$, the set of edges $ \{ e_v \mid v \in I\}$ is a spanning tree of $F$.  Since we have assigned the value of $a_v-b_v$ to the edge $e_v$ in $F$, the summand $\sum_{v\in I} \left(a_v-b_v\right)$ on the right-hand side is the weight of this spanning tree.

Thus, a minimum weight spanning tree in $F$ corresponds to a minimum cost Eulerian circuit in $G$.  Since it is well known that minimum weight spanning trees may be found in polynomial time (for example, by Kruskal's algorithm), it follows that optimal Eulerian circuits without crossings may be found for 4-regular plane graphs in polynomial time.
\hfill$\qed$
\end{proof}

\begin{figure}
\centering
\subfigure[A 4-regular plane graph $G$.]{
\quad\includegraphics[scale=.8]{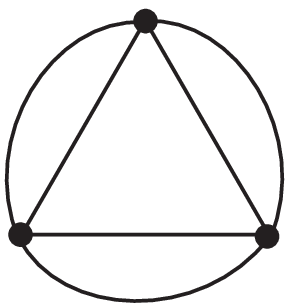}\quad
\label{c1.f6a}
}
\hspace{10mm}
\subfigure[A face 2-colouring of $G$. ]{
\quad\includegraphics[scale=.8]{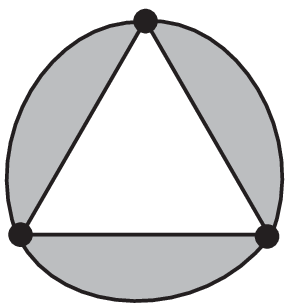}\quad
\label{c1.f6b}
}
\hspace{10mm}
\subfigure[Forming $F$.]{
 \quad\includegraphics[scale=.8]{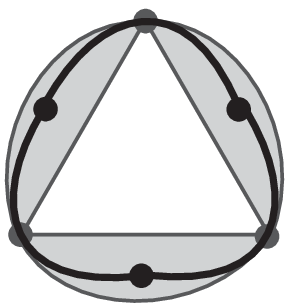}\quad
\label{c1.f6c}
}
\hspace{10mm}
\subfigure[The Tait graph $F$.]{
\quad\raisebox{0mm}{\includegraphics[scale=.8]{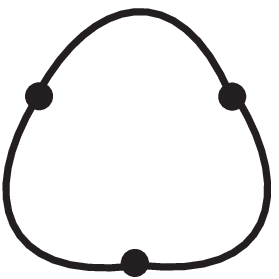}}\quad
\label{c1.f6d}
}
\caption{Forming  Tait  graphs.}
\label{taitgraph}
\end{figure}

\subsection{Graphs of low degree.} \label{low}

The associated graphs $G_I$ used to prove Theorem \ref{thrm:3SAT equiv} may have vertices of very high degree, but we show that restricting ourselves to graphs with low degree vertices, does not, in general, change the complexity of Problem~\ref{prob:Eulerian Costs}.

For the proof of Theorem~\ref{lo} we will need the Cartesian product of graphs. Let $G$ and $H$ be simple graphs (i.e., with no loops or multiple edges). Then the \emph{Cartesian product} $G \Box H$ is the graph with vertex set $V(G)\times V(H)$ and whose edge set is the set of all unordered pairs $((u_1,v_1),(u_2,v_2))$ such that either $(u_1,u_2)\in E(G)$ and $v_1=v_2$, or $(v_1,v_2)\in E(H)$ and $u_1=u_2$.

\begin{figure}
\centering
\subfigure[ A vertex $x_i$ of $G$.]{
\labellist \small\hair 2pt
\pinlabel {$e^i_1$} at   90 232
\pinlabel {$e^i_2$} at   30 185
\pinlabel {$e^i_3$} at     9 120
\pinlabel {$e^i_4$} at    30 49
\pinlabel {$e^i_5$} at   90 1
\pinlabel {$e^i_6$} at     163 1
\pinlabel {$e^i_7$} at     226 49
\pinlabel {$e^i_8$} at    245 120
\pinlabel {$e^i_9$} at     226 185
\pinlabel {$e^i_{10}$} at    163 232
\endlabellist
\raisebox{0mm}{\includegraphics[scale=0.5]{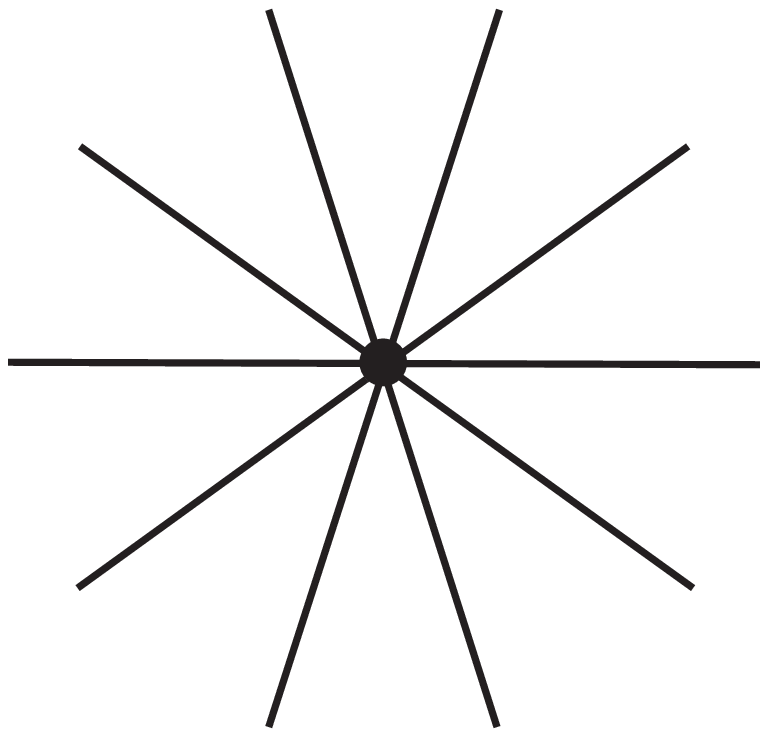}}
 \label{f.b1}
}
\hspace{5mm}
\subfigure[The graph $B_i$.]{
\labellist \small\hair 2pt
\pinlabel {$e^i_1$} at    127 242
\pinlabel {$e^i_2$} at    102 119
\pinlabel {$e^i_3$} at      46 7
\pinlabel {$e^i_4$} at    137 86
\pinlabel {$e^i_5$} at    252 151
\pinlabel {$e^i_6$} at     127 134
\pinlabel {$e^i_7$} at      2 151
\pinlabel {$e^i_8$} at     115 86
\pinlabel {$e^i_9$} at      203 6
\pinlabel {$e^i_{10}$} at    152 119
\endlabellist
\raisebox{0mm}{\includegraphics[scale=0.5]{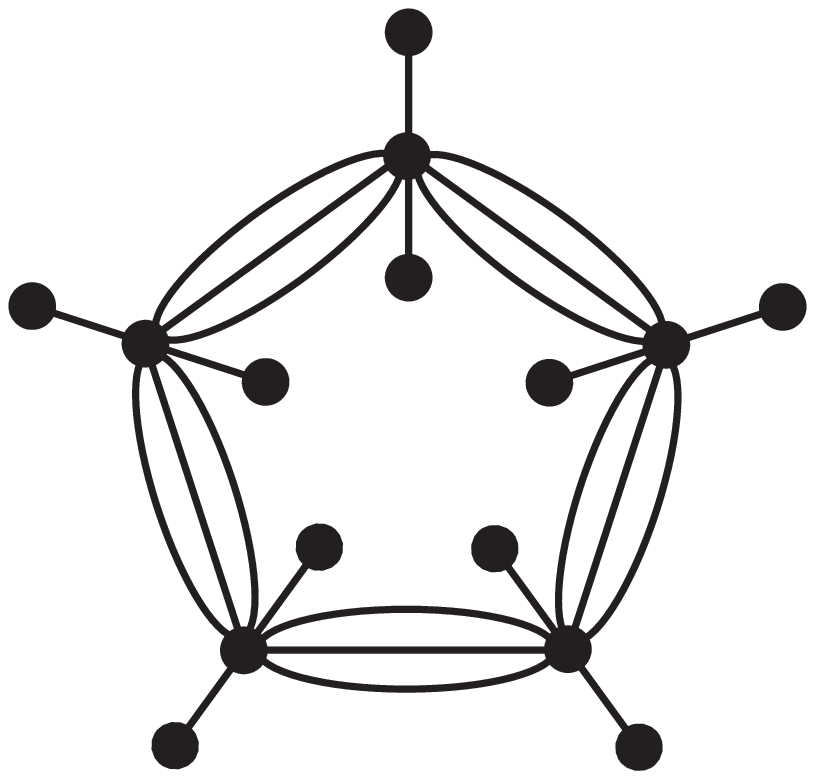}}
 \label{f.b2}
}
\hspace{5mm}
\subfigure[Blowing-up $x_i$.]{
\labellist \small\hair 2pt
\pinlabel {$e^i_1$} at   90 232
\pinlabel {$e^i_2$} at   30 185
\pinlabel {$e^i_3$} at     9 120
\pinlabel {$e^i_4$} at    30 49
\pinlabel {$e^i_5$} at   90 4
\pinlabel {$e^i_6$} at     163 4
\pinlabel {$e^i_7$} at     226 49
\pinlabel {$e^i_8$} at    245 120
\pinlabel {$e^i_9$} at     226 185
\pinlabel {$e^i_{10}$} at    163 232
\endlabellist
\raisebox{0mm}{\includegraphics[scale=0.5]{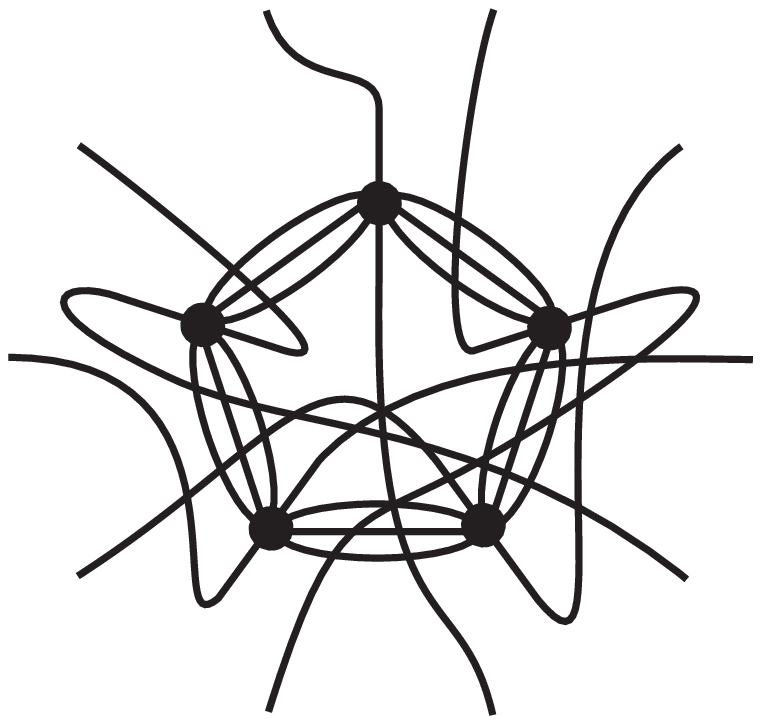}}
 \label{f.b3}
}

\subfigure[The apex vertex $u$.]{
\labellist \small\hair 2pt
\pinlabel {$u$} at   100 106
\pinlabel {$f_1$} at   31 10
\pinlabel {$f_2$} at  59  10
\pinlabel {$f_3$} at     86 10
\pinlabel {$f_4$} at    113 10
\pinlabel {$f_5$} at    140 10
\pinlabel {$f_6$} at  167 10
\endlabellist
\includegraphics[scale=0.5]{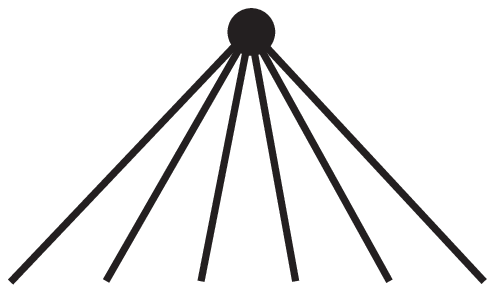}
 \label{f.b4}
}
\hspace{20mm}
\subfigure[The graph $B_u$ for $d=3$.]{
\labellist \small\hair 2pt
\pinlabel {$f_1$} at   31 3
\pinlabel {$f_2$} at  59  3
\pinlabel {$f_3$} at     86 3
\pinlabel {$f_4$} at    113 3
\pinlabel {$f_5$} at    140 3
\pinlabel {$f_6$} at  167 3
\endlabellist
\includegraphics[scale=0.5]{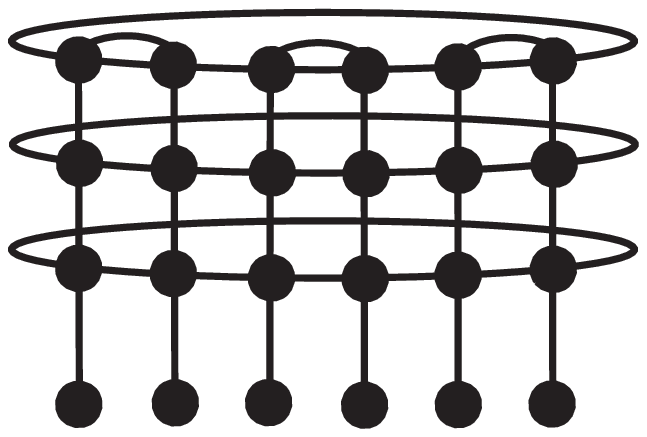}
 \label{f.b5}
}

\caption{The blow-ups $B_i$ and $B_u$.}
\label{f.blows}
\end{figure}

\begin{theorem}\label{lo}
Solving  Problem~\ref{prob:Eulerian Costs} remains NP-hard even restricted to the class of graphs of maximum degree 8.
\end{theorem}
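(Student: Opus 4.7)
The strategy is to construct from $G_I$ a modified Eulerian graph $G_I'$ with turning costs and maximum degree at most 8, such that $G_I$ admits a zero-cost Eulerian circuit if and only if $G_I'$ does. By Theorem~\ref{thrm:3SAT equiv}, this will upgrade the NP-hardness proof to the bounded-degree setting. The plan is to ``blow up'' each high-degree vertex of $G_I$ (these are the literal vertices $x_i$ and the apex vertex $u$) into a local gadget on constant-degree vertices while preserving exactly the zero-cost Eulerian circuit structure, and then verify that the replacement is polynomial time. Observation~\ref{l.deg} will be used so that every literal vertex we need to replace has degree $d_i \not\equiv 0 \pmod 4$, which in particular forces the required parity conditions inside the gadgets.

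For each literal vertex $x_i$, I would replace $x_i$ by the gadget $B_i$ of Figure~\ref{f.b2}, which carries exactly $d_i$ external half-edges in bijection with the half-edges previously incident with $x_i$, ordered so as to respect the original cyclic order (and hence the shaded/unshaded alternation). The internal vertices of $B_i$ each hold a bounded number of external half-edges together with a few internal edges, and internal turning costs are chosen so that the only zero-cost Eulerian traversals of $B_i$, when read off as pairings of the external half-edges, are precisely the two transition systems of Figure~\ref{f.st} — the ``true'' and ``false'' configurations at $x_i$. Thus $B_i$ externally emulates $x_i$ for the purposes of the reduction.

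For the apex vertex $u$, I would replace $u$ by the gadget $B_u$ of Figure~\ref{f.b5}, built as a Cartesian product of small graphs (for example a cycle with $K_2$, so that the internal vertex degrees are bounded by 8). The role of the product structure is that $B_u$ has one external half-edge per original half-edge at $u$, is itself Eulerian, and is sufficiently flexible that \emph{every} pairing of its external half-edges can be realized by an internal Eulerian traversal using only zero-cost turnings. Since all pairings at $u$ in $G_I$ were already assigned cost zero, replacing $u$ by $B_u$ preserves the solution set.

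The main obstacle is proving the two preservation lemmas: that a zero-cost Eulerian circuit of $G_I$ lifts to one of $G_I'$, and that any zero-cost Eulerian circuit of $G_I'$ projects, via the external pairings induced at each gadget, to a zero-cost Eulerian circuit of $G_I$. For $B_i$ this requires a careful case analysis combined with the parity assumption of Observation~\ref{l.deg} to rule out spurious zero-cost internal routings that would correspond to no valid transition at $x_i$; for $B_u$ it requires exhibiting an internal Eulerian traversal realizing any prescribed external pairing, which is where the Cartesian product structure is used. Once these lemmas are in place, it is routine to check that the overall blow-up is polynomial-time and that the maximum degree of $G_I'$ is 8, yielding the NP-hardness of Problem~\ref{prob:Eulerian Costs} restricted to graphs of maximum degree 8.
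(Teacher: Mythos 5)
Your overall plan is the same as the paper's (blow up each high-degree vertex into a bounded-degree gadget, use Observation~\ref{l.deg} for parity, prove two preservation lemmas), but as written it defers exactly the parts that constitute the proof. You never construct $B_i$ or specify its turning costs; you only assert that costs ``are chosen so that'' the right external pairings emerge, and then you list the verification of this as ``the main obstacle.'' The paper's construction is concrete: since $d(x_i)/2=2k+1$ is odd (this is where Observation~\ref{l.deg} enters), $B_i$ is a plane $(2k+1)$-cycle with each cycle edge tripled and two pendant edges at each cycle vertex, labelled so that consecutive external labels alternate between the bounded and unbounded faces --- oddness of $2k+1$ is what makes this labelling close up consistently --- and with cost zero only on planarly consecutive pairings. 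One then checks that any zero-cost through-trail must join $e^i_j$ to $e^i_{j\pm 1}$. Note also that the gadget need not (and does not) enforce ``precisely the two transition systems of Figure~\ref{f.st}''; it only enforces consecutive pairings, and the collapse to the two global configurations is then inherited from the parity argument already present in the proof of Theorem~\ref{thrm:3SAT equiv}. Your stronger requirement is not needed and would be harder to engineer locally.

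Two further concrete problems. First, your proposed apex gadget, a Cartesian product of a cycle ``with $K_2$,'' does not have the routing capacity you need: the essential property of $B_u$ is that \emph{every} perfect pairing of the $2d$ pendant edges is realizable by $d$ edge-disjoint internal paths, and the paper achieves this by taking $C_{2d}\Box P_d$ and dedicating one of the $d$ copies of $C_{2d}$ to each pair; a prism $C_{2d}\Box K_2$ has far too few edges to route $d$ edge-disjoint paths for an arbitrary pairing. Second, in the converse direction there is a subtlety you do not mention: lifting a zero-cost Eulerian circuit of $G$ by replacing each pair $f_iuf_j$ with an internal path $P_{ij}$ produces a closed walk that in general misses some edges of $B_u$. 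The paper repairs this by taking Eulerian circuits of the components of the leftover edge set and splicing them in as detours, which costs nothing because all pairings inside $B_u$ have cost zero. Without this step your lifted circuit is not Eulerian and the ``if'' direction fails.
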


\begin{proof}
Let $G=G_I$ be the Eulerian graph with turning costs  associated with a 3-SAT instance $I$ as constructed in Section~\ref{sec:associated graph}. By  Observation~\ref{l.deg}, we may assume, without loss of generality,  that if a non-apex  vertex of $G$ has degree greater than 8, then its  degree is not divisible by 4. To prove the theorem, we will construct, in polynomial time, a Eulerian graph with turning costs, $G'$, that has maximum degree $\Delta(G')\leq 8$.  Furthermore, $G'$ will have a zero-cost Eulerian circuit if and only if $G$ does.  To construct $G'$,  we `blow-up' each high degree vertex of $G$, replacing it with a special graph that has maximum degree~8. We will need two types of blow-ups: one for the vertices $x_i$ arising from the variables of $I$, and one for the apex vertex $u$.

We denote the  edges incident with  a non-apex vertex $x_i$ of $G$ by $e^i_1, e^i_2, \ldots, e^i_{d(x_i)}$ and we assume that they appear in that cyclic order (with respect the orientation of the plane, as in the construction of $G$). Furthermore,  if $d(u)=2d$, we let $f_1, \ldots , f_{2d}$ denote the edges of $G$ that are  incident with  $u$. See Figures~\ref{f.b1} and~\ref{f.b4}.

For the first type of blow-up, for a vertex $x_i$, we  note that since $d(x_i)$   is even and not divisible by 4, then $d(x_i)/2 = 2k+1$ for some $k$.  We then form a graph $B_i$ as follows.  Start with a plane $(2k+1)$-cycle.  At each vertex, place two nugatory edges (i.e., edges with a degree one vertex) such that one lies in the bounded region, and one in the unbounded region.  Add two parallel copies of each non-nugatory edge to the $(2k+1)$-cycle.   Label the nugatory edges by labelling an arbitrary edge $e^i_1$. Then, if $e^i_j$ has been assigned to some nugatory edge in the unbounded (respectively, bounded) face, travel round the outer face following the orientation of the plane and at the next vertex label the nugatory edge in the bounded (respectively, unbounded) face $e^i_{j+1}$. Continue until all nugatory edges have been labelled, which happens since $2k+1$ is odd.  See Figure~\ref{f.b2}.
Assign pair costs to this graph as follows. Give a cost of zero to any consecutive pairing of half-edges  with respect to the (plane) orientation about each vertex in $B_i$. All other pairings of half-edges have cost one. The resulting graph with turning costs is $B_i$.

To {\em blow-up} a vertex $x_i$ in $G$, we replace it with $B_i$ as follows.   Suppose an edge $e^i_j$  of $G$  has endpoints $x_i$ and $w$.  Then we identify the degree one vertex of  $e^i_j$ in $B_i$ with the vertex $w$.  We do this for each $e^i_j$, and then delete the vertex $x_i$ and its incident edges.

For the blow-up of the apex vertex $u$, which has degree $2d$, we begin by taking the Cartesian product   $C_{2d}\Box P_d $ of a $2d$-cycle and a $d$-path. Consider the $2d$-cycles in $C_{2d}\Box P_d $ that correspond to the original copy of $C_{2d}$. Exactly  two of these cycles contain degree 3 vertices. Denote these two cycles by $C'_{2d}$ and $C''_{2d}$. For each vertex in $C'_{2d}$ attach a nugatory edge, and label these nugatory edges $f_1, \ldots , f_{2d}$. Next take a parallel copy of $d$ distinct non-adjacent edges in the cycle  $C''_{2d}$. Denote the resulting graph by $B_u$. See Figure~\ref{f.b5}. Note that $B_u$ contains exactly $2d$ degree one vertices, which are ends of the edges labelled $f_1, \ldots , f_{2d}$, and every other vertex is of degree 4.  Assign pair costs to the degree 4 vertices of $B_u$ by giving each pairing cost zero.
Observe, for  later, that for every partition of $\{f_1, \ldots , f_{2d}\}$ into pairs, there exists $d$ edge-disjoint paths in $B_u$ such that $f_i$ and $f_j$ are in the same path if and only if they are paired in the partition, in other words, edge-disjoint paths can be found such that each path contains the $f_i, f_j$ from exactly one pair.  One way of doing this is to assign a copy of $C_{2d}$ to each pair $f_i , f_j$.  The path for a pair $f_i , f_j$  begins by following the copy of $P_d$ that $f_i$ is incident to until it intersects the copy of $C_{2d}$ assigned to $f_i , f_j$.  Then follow $C_{2d}$ in either direction until it copy of $P_d$ that $f_j$ is incident to, and follow that $P_d$ to $f_j$.

The vertex $u$ is blown-up similarly to the $x_i$'s by identifying each degree 1 vertex labeled $f_i$ in $B_u$ with the non-$u$ endpoint of the edge labeled $f_i$ in $G$, and then deleting $u$ and its incident edges.

Now, let $G'$ be the graph obtained from $G$ by blowing-up each vertex $x_i$ that has $d(x_i)>8$ using $B_i$, and, if $d(u)>8$, blowing-up $u$ using $B_u$. The turning costs of $G'$ are inherited from those of the $B_i$, $B_u$, and those of the vertices of $G$ with degree at most 8.  Observe that $G'$ is constructed from $G$ in polynomial time in the number of edges and vertices.
It remains to show that $G$ has a zero-cost Eulerian circuit if and only if $G'$ does.

Suppose that $C'= g_1g_2 \cdots g_p$ is a zero-cost Eulerian circuit of $G'$, specified by the edges $g_i$ of $G'$. (We read all  circuits here cyclically so that we regard, for example, $g_pg_1$ as a subtrail of $C'$.)
Consider a subtrail in $C'$ of the form $e^i_j w e^i_k $, where $w$ is a trail contained entirely in $B_i$. By examining Figure~\ref{f.b2}, observe that  $k=j+1$ or $k=j-1$  (otherwise there is a cost  greater than zero, or $C'$ contains more than one closed walk and hence is not an Eulerian circuit). Thus any subtrail in $C'$ contained in any $B_i$ is of the form $e^i_j w e^i_{j\pm 1}$, and we can obtain a zero-cost pairing at the vertex $x_i$ of $G$ by deleting the subtrail $w$.
Now given $C'$, construct a subsequence $C$ by reading through $C'$ (cyclically).  Whenever there is a subtrail $e^i_j w e^i_{j\pm 1}$ with $w$ contained entirely in $B_i$, delete $w$; whenever there is a subtrail of the form $f_iwf_j$ with  $w$ contained entirely in $B_u$, delete $w$. Then $C$ defines an Eulerian circuit in $G$ (since the edges contained in $C$ are exactly the edges of $G$; and since each  $e^i_j e^i_{j\pm 1}$, and each $f_i f_j$ define valid pairings in $G$). Moreover, $C$ is of cost zero since each pairing $e^i_j e^i_{j\pm 1}$, and $f_i f_j$ in $G$ is of cost zero. Thus if there is a zero-cost Eulerian circuit in $G'$, there is  a zero-cost  Eulerian circuit in $G$.

Conversely, let  $C= g_1w_1g_2w_2 \cdots g_pw_1$ be a zero-cost Eulerian circuit of $G$, where the $g_i$ are edges and the $w_i$ are vertices. Any subtrail  $e^i_j x_i e^i_{j\pm 1}$ in $C$  determines a unique zero-cost trail $e^i_j w_{ij} e^i_{j\pm 1}$ through $B_i$ (see Figures~\ref{f.b1}--\ref{f.b3}). Also, $C$ determines a partition of $\{f_1, \ldots , f_{2d}\}$ into pairs where the pairs correspond to the subtrails $f_iuf_j$. As observed above, there exist $d(u)/2$ disjoint paths in $B_u$ such that $f_i$ and $f_j$ are in the same path if and only if they are paired in the partition. For such a set of disjoint paths, let $P_{ij}$ denote the one that contains $f_i$ and $f_j$.
Now read through $C$ (cyclically),  replace each pair $e^i_j e^i_{j\pm 1}$ with $e^i_j w_{ij} e^i_{j\pm 1}$, and replace each pair $f_iuf_j$ with $P_{ij}$. Denote the resulting sequence by $\tilde{C}'$. We have that $\tilde{C}'$ determines a circuit in $G'$. Moreover, all turning costs in this circuit are zero. However,    $\tilde{C}'$ may not be an Eulerian circuit. This is since there may be edges in $B_u$ that are not in  $\tilde{C}'$. By construction, any edges of $G'$ that are not in $\tilde{C}'$ must be unlabelled edges of $B_u$. We extend $\tilde{C}'$ to include these edges  as follows. As $G'$ is Eulerian,  each component of $G'\backslash E(\tilde{C}')$ is even, and therefore contains an Eulerian circuit. Denote these Eulerian circuits by $D_1, \ldots, D_q$. Extend $\tilde{C}'$ to an Eulerian circuit $C'$ of $G'$ by following  $\tilde{C}'$ until we meet a vertex in one of the $D_j$, detour round this circuit and remove it from the list of circuits, then continue along $\tilde{C}'$, repeating this process until each $D_j$ has been used. As $G$ is connected, this results in an Eulerian circuit. Moreover, as all transitions in $B_u$ have zero-cost, $C'$ is a zero-cost Eulerian Circuit.  Thus if there is a zero-cost Eulerian circuit in $G$ there is a   to a zero-cost  Eulerian circuit in $G$, completing the proof of the theorem
\hfill$\qed$
\end{proof}

\section{Further implications}

Our results have  immediate ramifications for biomolecular computing.  In addition there are other closely related problems in the literature, for example mill routing, about which the results here also inform.

Our primary motivation for this investigation was finding design strategies for self-assembling structures (see \cite{thetilespaper,website}, for example).  One of the measures of the goodness of a design strategy, particularly one that is to be used as part of a biomolecular computing process, is how efficiently it may be found.  Finding an optimal threading for the scaffolding strand of a DNA origami construction of a graph theoretical structure corresponds to finding an optimal Eulerian circuit with turning costs, and we have shown here that this problem is intractable. This is an important first step in determining the complexity of the \emph{input} to biomolecular computing problems. A consequence is that single strand DNA origami methods may not be suitable as a generic starting point for efficient biomolecular computation of graph invariants.  While there are some provably optimal design strategies for other construction methods,  for example for branched junction molecules methods of DNA self-assembly of a few common classes of graphs (see \cite{thetilespaper,website}), the computational complexity of the general problem  for these other methods has not yet been studied.

While we have addressed here a computational question arising from origami folding, an earlier assembly method of DNA self-assembly uses a design strategy of tracing each edge of the graph twice, once in each direction, while prohibiting double-backs (hairpin turns) and other disconnections at the vertices (see \cite{NS02,NSW09}).  To our knowledge, the computational complexity of this method has not yet been addressed beyond its correspondence with graph genus.  While it is possible that the techniques presented here could be adapted to this earlier method, it is not immediate, since simply doubling the edges of the graph and forbidding only double-backs gives a very special case of the turning cost problem, and might conceivably be tractable. Furthermore, simply doubling the edges does not guarantee a solution where each original edge is traversed in opposite directions.  Also, finding a route for the scaffolding strand and placing the staples, then removing the nicks between the staples, while it does cover each edge twice in opposite directions, generally results in covering the graph with multiple circular strands, not just one.  Nonetheless, the methods presented here offer a possible approach to analysing open questions arising from this earlier assembly method.

Mill routing is another problem very closely related to Problem \ref{prob:Eulerian Costs}.  In discrete thin mill routing, a router needs to cover all the edges of a graph, which, in the case of orthogonal discrete thin mill routing, is a subset of a grid (see \cite{A+05}).   It is fastest for a router to go straight across when encountering a grid point,  more time consuming for it to turn left or right, and quite slow for it to go back the way it came.  Thus, this is a turning cost problem.

While  Problem \ref{prob:Eulerian Costs} is closely related to the mill routing problem, it differs in that the mill routing problem allows edges to be repeated, and even to  ``double back'' on an edge, while we prohibit this as it should be avoided with DNA origami.   Such subtle differences can have a profound effect on the computational complexity of a problem, as can be seen for example among the variations of the Chinese Postman problem, where the original problem may be solved in polynomial time, but even minor modifications such as directing some subset of the edges lead to NP-hardness (see \cite{EGL95} and \cite{Fle90,Fle91} for overviews).

The mill routing problem was shown in \cite{A+05} to be NP-hard in the general case of any underlying graph, but again allowing edges to be traversed more than once.  Thus, the results of \cite{A+05}, which show that the mill routing problem is NP-hard, do not apply here.  The results here, however, show that the discrete thin mill routing problem remains NP-hard, even in the special case that the desired tour must be an Eulerian circuit.  Thus, even if  a set of repeated edges is specified  ahead of time (this corresponds to doubling an edge), or if, in general, augmenting edges are added to make the graph Eulerian, then the general  discrete thin mill routing problem remains intractable if the desired tour must be an Eulerian circuit.

\section*{Acknowledgement} We thank Ned Seeman for specific design problems leading to this research and for many related discussions.

\end{document}